\theoremstyle{plain}
\newtheorem{thm}{Theorem}[section]
\newtheorem{lem}[thm]{Lemma}
\newtheorem{prop}[thm]{Proposition}
\newtheorem{cor}[thm]{Corollary}
\newtheorem{fact}[thm]{Fact}
\newtheorem{qu}[thm]{Problem}
\theoremstyle{definition}
\theoremstyle{remark}
\newcommand{\nc}{\newcommand}
\nc{\dmo}{\DeclareMathOperator}
\DeclareMathOperator{\PConf}{PConf}
\DeclareMathOperator{\Conf}{Conf}
\DeclareMathOperator{\PB}{PB}
\DeclareMathOperator{\Mod}{Mod}
\DeclareMathOperator{\Diff}{Diff}
\DeclareMathOperator{\QQ}{\mathbb{Q}}
\DeclareMathOperator{\CC}{\mathbb{C}}
\DeclareMathOperator{\ZZ}{\mathbb{Z}}
\nc{\para}[1]{\medskip\noindent\textbf{#1.}}
\title{From pure braid groups to hyperbolic groups}
\author{Lei Chen}
\begin{document}
 \bibliographystyle{alpha}
\maketitle

\begin{abstract}
In this note we show that any homomorphism from a pure surface braid group to a torsion-free hyperbolic group  either has a cyclic image or  factors through a forgetful map. This extends and gives a new proof of an earlier result \cite{Chen} of the author which works only when the target is a free group or a surface group. We also prove a similar rigidity result for the pure braid group of the disk.

\end{abstract}
\section{Introduction}
Let $S_{g,p}$ be a closed surface of genus $g$ with $p$ punctures.
Let $\PConf_n(S_{g,p}):=S_{g,p}\times ...\times S_{g,p}-\triangle$ be the space of ordered $n$-tuples of distinct points in $S_{g,p}$ and let $PB_n(S_{g,p}):=\pi_1(\PConf_n(S_{g,p}))$ be the surface braid group. We call a group $\Lambda$ \emph{hyperbolic} if the Cayley graph of $\Lambda$ is $\delta$-hyperbolic for some $\delta>0$. Examples of hyperbolic groups include free groups and $\pi_1(M)$ when $M$ is a compact hyperbolic manifold. Surface braid groups have many homomorphisms to hyperbolic groups. Composing the map $p_i: PB_n(S_{g,p})\to \pi_1(S_{g,p})$ and $\Phi: \pi_1(S_{g,p})\to \Lambda$, where $p_i$ is the induced map on the fundamental groups of the natural projection $\PConf_n(S_{g,p})\to S_{g,p}$ to the $i$th coordinate and $\Lambda$ a hyperbolic group provides such an example. We can even obtain surjective homomorphisms from $PB_n(S_{g,p})$ to any finite generated hyperbolic groups as $g$ varies because $\pi_1(S_{g,p})$ has surjective homomorphisms to any finite generated group as $g$ varies. In this paper, we will classify all homomorphisms from $PB_n(S_{g,p})$ to torsion-free hyperbolic groups.

The result of this paper has a precursor. In  \cite{Chen} we proved that any surjective homomorphism $PB_n(S_g)\to \Lambda$, where $\Lambda$ is a nonabelian surface group or a nonabelian free group, factors through the natural projection $p_i$ for some $i$. The following theorem generalizes \cite{Chen} in two ways. First, the target is extended to all torsion-free, non-elementary hyperbolic groups; and second, the domain is extended to finite index subgroups of $PB_n(S_g)$ whose quotient is an abelian group. For simplicity, we omit $p$ whenever $p=0$. Let $\Gamma\lhd PB_n(S_{g,p})$ be a  finite index normal subgroup and let $\Gamma_i:=p_i(\Gamma)$.
\begin{thm}[Classification of homomorphisms for braid groups of closed surfaces]
\label{hyperbolic}
Let $n>0$ and $g>1$,  and let $\Lambda$ be a torsion-free, non-elementary hyperbolic group. If $\PB_n(S_g)/\Gamma$ is abelian, then any homomorphism $\Gamma\to \Lambda$ either factors through $p_i$ or its image is a cyclic group. 
\end{thm}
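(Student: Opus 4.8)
The plan is to induct on $n$ using the Fadell--Neuwirth fibration, reducing the whole statement to a single commuting-subgroup analysis inside $\Lambda$.

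\para{Hyperbolic preliminaries} Since $\Lambda$ is torsion-free hyperbolic, every abelian subgroup is trivial or infinite cyclic and the centralizer of an infinite-order element is infinite cyclic; hence every subgroup of $\Lambda$ is either cyclic or \emph{non-elementary} (contains a rank-two free group). I will use two consequences. First, a non-elementary subgroup $H\le\Lambda$ has trivial centralizer, because $C_\Lambda(H)$ lies in the intersection of the cyclic centralizers of two independent hyperbolic elements of $H$, whose axes differ. Second, a non-elementary subgroup contains no nontrivial normal cyclic subgroup, for the same axis-preservation reason; so if $M\lhd G\le\Lambda$ with $\phi(G)$ non-elementary and $\phi(M)$ cyclic, then $\phi(M)=1$.

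\para{Inductive scaffold} Let $P_j\lhd\PB_n(S_g)$ be the point-pushing subgroup of the $j$-th strand, i.e. the kernel of the map forgetting the $j$-th point; it is normal and isomorphic to $\pi_1(S_{g,n-1})$, which is non-elementary since $g>1$. Put $N:=\Gamma\cap P_n$, a finite-index, hence non-elementary, normal subgroup of $\Gamma$. The base case $n=1$ is immediate, as $p_1$ is the identity. For the step I split on $\phi(N)$. If $\phi(N)$ is cyclic, then either $\phi(\Gamma)$ is cyclic and we are done, or $\phi(\Gamma)$ is non-elementary and the second preliminary forces $\phi(N)=1$; then $\phi$ descends to $\Gamma/N$, which embeds as a finite-index normal subgroup of $\PB_{n-1}(S_g)$ with abelian quotient, and the inductive hypothesis makes the descended map cyclic or factor through some $\bar p_i$. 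Since $p_i$ at level $n$ is $\bar p_i$ precomposed with forgetting the last point, $\phi$ is then cyclic or factors through $p_i$.

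\para{Core case} It remains to treat $\phi(N)$ non-cyclic, so $\phi(\Gamma)$ is non-elementary; I claim $\phi$ factors through $p_n$. As $\ker p_n\cong\PB_{n-1}(S_{g,1})$ is generated by the subgroups $P_j$ with $j\ne n$, and each $P_j\cong\pi_1(S_{g,n-1})$ is generated by pushes $\mathrm{push}_j(\gamma)$ along simple closed curves, it suffices to kill $\phi\big(\mathrm{push}_j(\gamma)^m\big)$ for every such $\gamma$ and every power landing in $\Gamma$. Fix $\gamma$, let $\Sigma$ be a genus-$\le 1$ subsurface carrying it, and use $g>1$ to choose a genus-one subsurface $\Sigma'\subseteq S_g\setminus\Sigma$. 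Pushing the $n$-th point inside $\Sigma'$ commutes with $\mathrm{push}_j(\gamma)$ because the two motions have disjoint support; so if $\phi$ is non-elementary on the subgroup $D\le P_n$ of pushes supported in $\Sigma'$ (intersected with $\Gamma$), then $\phi\big(\mathrm{push}_j(\gamma)^m\big)$ centralizes the non-elementary group $\phi(D\cap\Gamma)$ and is therefore trivial by the first preliminary. Ranging over all $\gamma$ and all $j\ne n$ kills $\ker p_n\cap\Gamma$ and exhibits $\phi=\bar\phi\circ p_n$, the abelian-quotient hypothesis absorbing the finite-index slack.

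\para{Main obstacle} The crux is the localization invoked above: passing from non-elementarity of $\phi$ on the whole point-pushing group $P_n\cong\pi_1(S_{g,n-1})$, which fills $S_g$, to non-elementarity on the pushes supported in a prescribed $\Sigma'$ disjoint from $\gamma$. This is genuinely delicate, and a naive curve-by-curve argument can fail: writing $\pi_1(S_{g,n-1})$ as an HNN extension or amalgam along $\gamma$, the restriction of $\phi$ to the complement of $\gamma$ can be cyclic even though $\phi(P_n)$ is non-elementary, in which case no genus-one piece disjoint from $\gamma$ carries a non-elementary image. Overcoming this is where I expect the real work to lie, and the tool I would use is the extra rigidity coming from the conjugation action of $P_j$ on $P_n$ by point-pushing automorphisms (the Birman exact sequence): this action forces $\phi|_N$ to be equivariant up to inner automorphisms of $\Lambda$, which should prevent the non-elementary part of $\phi|_{P_n}$ from concentrating away from every complementary subsurface and thereby supply the required $\Sigma'$. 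Once the localization is established, the induction closes and the theorem follows.
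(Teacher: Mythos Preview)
Your proposal has a genuine gap, and you have correctly located it yourself in the ``Main obstacle'' paragraph: the localization step in the core case is not proved. You need that whenever $\phi|_{P_n\cap\Gamma}$ is non-elementary, then for \emph{every} simple closed curve $\gamma$ carrying a generator of $P_j$ (with $j\ne n$) there is a subsurface $\Sigma'$ disjoint from $\gamma$ on which the $n$-th point-pushes still have non-elementary image. As you observe, this can genuinely fail for a single homomorphism from a surface group to a hyperbolic group: in an amalgam or HNN decomposition along $\gamma$, the restriction to the complementary piece may well be cyclic while the global image is non-elementary. Your proposed repair via the Birman conjugation action of $P_j$ on $P_n$ is only a heuristic; you have not stated what equivariance property you would extract, nor why it forces non-elementarity on some specific disjoint subsurface. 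Until that is supplied, the core case does not go through and the induction does not close.

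The paper's proof takes a completely different route that sidesteps this obstacle. Instead of the Fadell--Neuwirth induction, it uses the embedding $e:\PConf_n(S_g)\hookrightarrow (S_g)^n$ and the resulting exact sequence $1\to K\to PB_n(S_g)\xrightarrow{e_*}\pi_1(S_g)^n\to 1$, where $K$ is normally generated by the Dehn twists $T_{ij}$ about curves enclosing the $i$-th and $j$-th marked points. The centralizer $C(T_{ij})$ is a $\ZZ$-central extension of $PB_{n-1}(S_g)$ whose Euler class is the pull-back, via a forgetful map, of the Euler class of the unit tangent bundle $US_g\to S_g$; since $g>1$ this class is rationally nonzero. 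Lemma~\ref{centralextension} then forces $\rho(T_{ij})=1$ for any homomorphism to $\Lambda$, so $\rho$ kills all of $K$ and descends to $\pi_1(S_g)^n$; Theorem~\ref{producthyperbolic} on products finishes the job. The abelian-quotient hypothesis enters only to ensure $K\subset\Gamma$, since $e_*$ is an isomorphism on $H_1$. This Euler-class argument replaces your unresolved localization with a single cohomological computation, which is exactly the missing idea in your approach.
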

Understanding homomorphisms with cyclic image $\Gamma\to \ZZ$ (as we assume that the target group is torsion-free) is the same as computing $H^1(\Gamma;\mathbb{Q})$. The following theorem gives us a computation of the first betti number of $\Gamma$.
\begin{thm}[First betti number of braid groups of closed surfaces]
\label{bettinumber}
Let $n>0$ and $g>1$. If $PB_n(S_g)/\Gamma$ is abelian, then 
\[H^1(\Gamma;\QQ)=\bigoplus p_i^*(H^1(\Gamma_i;\QQ)).
\]
\end{thm}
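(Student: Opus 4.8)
The plan is to compute $H^1(\Gamma;\QQ)$ after extending scalars to $\CC$, organizing the calculation by the characters of the finite abelian group $Q:=PB_n(S_g)/\Gamma$. Write $G:=PB_n(S_g)$, $\pi:=\pi_1(S_g)$, and let $p=(p_1,\dots,p_n)\colon G\to \pi^n=\pi_1(S_g^{\,n})$ be induced by $\PConf_n(S_g)\hookrightarrow S_g^{\,n}$. The first step is to observe that, because $g>1$, the map $p$ induces an isomorphism $G^{\mathrm{ab}}\xrightarrow{\ \cong\ }(\pi^n)^{\mathrm{ab}}=\ZZ^{2gn}$: the linking classes $A_{ij}$ generate $\ker(p_*)$ on $H_1$ and vanish in $H_1(G;\ZZ)$ (equivalently, the Gysin sequence for removing the diagonals $\Delta_{ij}=\{x_i=x_j\}$ from $S_g^{\,n}$ identifies $H^1(S_g^{\,n};\ZZ)$ with $H^1(\PConf_n(S_g);\ZZ)$, since the classes $[\Delta_{ij}]$ are independent primitive elements of $H^2(S_g^{\,n};\ZZ)$). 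Consequently $\ker p\subseteq[G,G]\subseteq\Gamma$, so $\Gamma=p^{-1}(H)$ for $H:=p(\Gamma)\le\pi^n$, and $\Gamma_i=\mathrm{proj}_i(H)$.

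Next I would pass to characters. Since $\Gamma\lhd G$ has abelian quotient $Q$, Shapiro's lemma (transfer) gives $H^1(\Gamma;\CC)=\bigoplus_{\chi\in\widehat Q}H^1(G;\CC_\chi)$, where $\CC_\chi$ is the rank-one local system attached to $\chi\colon G\to Q\to\CC^\times$. By Step 1 every such $\chi$ factors through $p$, so $\chi=\prod_i(\chi_i\circ p_i)$ with $\chi_i\in\widehat\pi$, and $\CC_\chi$ is the restriction to $\PConf_n(S_g)$ of $\CC_{\chi_1}\boxtimes\cdots\boxtimes\CC_{\chi_n}$ on $S_g^{\,n}$; the condition $\chi|_H=1$ says exactly that $\chi_i|_{\Gamma_i}=1$, i.e. $\chi_i\in\widehat{\pi/\Gamma_i}$. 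The goal becomes to show that $H^1(G;\CC_\chi)$ vanishes as soon as two or more of the $\chi_i$ are nontrivial, and otherwise equals the corresponding twisted cohomology of the surface.

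The main step is the twisted Gysin (Leray) sequence for $\PConf_n(S_g)\hookrightarrow S_g^{\,n}$ with coefficients in $\CC_\chi$, which in low degrees reads
\[
0\to H^1(S_g^{\,n};\CC_\chi)\to H^1(G;\CC_\chi)\to \bigoplus_{i<j}H^0\!\big(\Delta_{ij}^{\circ};\CC_\chi\big)\xrightarrow{\ \partial\ } H^2(S_g^{\,n};\CC_\chi),
\]
where $\Delta_{ij}^{\circ}$ is the open stratum $\{x_i=x_j\}$ (deeper strata have real codimension $\ge 4$ and do not affect $H^1$) and $\partial$ sends the generator to the twisted fundamental class $[\Delta_{ij}]_\chi$. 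Now $H^0(\Delta_{ij}^{\circ};\CC_\chi)$ is nonzero only when $\CC_\chi$ restricts trivially to $\Delta_{ij}^{\circ}$, which forces $\chi_k=1$ for all $k\ne i,j$ and $\chi_i\chi_j=1$. Hence if two or more $\chi_i$ are nontrivial, the only possible contribution is from a single diagonal $\Delta_{ij}$ with $\chi_j=\chi_i^{-1}$ and all other $\chi_k$ trivial; there Künneth gives $H^1(S_g^{\,n};\CC_\chi)=0$ and $H^2(S_g^{\,n};\CC_\chi)\cong H^1(\pi;\CC_{\chi_i})\otimes H^1(\pi;\CC_{\chi_i^{-1}})$, under which $[\Delta_{ij}]_\chi$ is the copairing element of Poincaré duality. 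This is where $g>1$ enters: $\dim H^1(\pi;\CC_{\chi_i})=2g-2>0$, so the pairing is perfect on nonzero spaces and $[\Delta_{ij}]_\chi\ne0$; thus $\partial$ is injective and $H^1(G;\CC_\chi)=0$. When at most one $\chi_i$ is nontrivial, all the groups $H^0(\Delta_{ij}^{\circ};\CC_\chi)$ either vanish or $\partial$ is injective on them (the trivial-$\chi$ case being the untwisted computation with the independent classes $[\Delta_{ij}]$), so $H^1(G;\CC_\chi)=H^1(\pi^n;\CC_\chi)$, which by Künneth is $H^1(\pi;\CC_{\chi_i})$ placed in the $i$-th factor (or $\bigoplus_i H^1(\pi;\CC)$ when $\chi$ is trivial).

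Finally I would reassemble. Grouping the surviving characters by the index $i$ of the unique nontrivial coordinate (and distributing the trivial character among all factors via $H^1(\pi^n;\CC)=\bigoplus_i H^1(\pi;\CC)$), and applying Shapiro on $S_g$ for each cover $\Gamma_i\le\pi$, one obtains
\[
H^1(\Gamma;\CC)=\bigoplus_{i=1}^n\ \bigoplus_{\psi\in\widehat{\pi/\Gamma_i}}H^1(\pi;\CC_\psi)=\bigoplus_{i=1}^n H^1(\Gamma_i;\CC),
\]
and naturality of transfer identifies the $i$-th summand with the image of $p_i^*$. Hence $\bigoplus_i p_i^*\colon\bigoplus_i H^1(\Gamma_i;\CC)\to H^1(\Gamma;\CC)$ is an isomorphism; as every map involved is defined over $\QQ$, the same holds over $\QQ$, which is the assertion. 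The one genuinely delicate point is the nonvanishing of the twisted diagonal class $[\Delta_{ij}]_\chi$, i.e. identifying $\partial$ with the Poincaré-duality copairing, and this is exactly where $g>1$ is indispensable (for $g=1$ the twisted $H^1$ of the torus can vanish and spurious linking classes appear).
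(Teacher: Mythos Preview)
Your argument is correct and follows a genuinely different route from the paper's. The paper deduces Theorem~\ref{bettinumber} jointly with Theorem~\ref{hyperbolic}: it shows that every conjugate of the diagonal twist $T_{ij}$ dies under any homomorphism $\Gamma\to\ZZ$ by observing that the centralizer $C(T_{ij})\subset PB_n(S_g)$ is a $\ZZ$-central extension of $PB_{n-1}(S_g)$ whose Euler class is a pullback of the Euler class of $US_g$ and hence rationally nonzero (Lemma~\ref{centralextension} and its corollary). Once $K=\ker e_*$ is killed, $H^1(\Gamma;\QQ)=H^1(e_*(\Gamma);\QQ)$ and the product decomposition follows from Theorem~\ref{producthyperbolic}(1). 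You instead work entirely in cohomology: Shapiro decomposes $H^1(\Gamma;\CC)$ over characters of $Q$, and the twisted Gysin sequence for $\PConf_n(S_g)\hookrightarrow S_g^{\,n}$ computes each summand directly, the key point being the nonvanishing of the twisted diagonal class via Poincar\'e duality. The paper's approach buys the hyperbolic-target classification for free and needs only one topological input (the Euler class of $US_g$); yours is self-contained for the Betti-number statement and makes the role of the hypothesis $g>1$ completely transparent---it is exactly that $\dim H^1(S_g;\CC_\psi)=2g-2>0$ for $\psi$ nontrivial, so the copairing element is nonzero.

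One small correction: the assertion ``$\chi|_H=1$ says exactly that $\chi_i|_{\Gamma_i}=1$'' is false in general (take $H=\{(a,b)\in\ZZ^2:a+b\equiv0\pmod 2\}$, where $\Gamma_1=\Gamma_2=\ZZ$ but $(-1)^{a+b}$ is a nontrivial character of $\ZZ^2/H$). Fortunately your reassembly step only uses the special case where $\chi$ has at most one nontrivial coordinate, and there the equivalence does hold: $(1,\dots,\chi_i,\dots,1)$ vanishes on $H$ if and only if $\chi_i$ vanishes on $\mathrm{proj}_i(H)=\Gamma_i$. The characters with several nontrivial coordinates that you may have omitted from $\widehat{Q}$ all contribute zero by your own K\"unneth/Gysin analysis, so the final computation is unaffected.
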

For the  pure braid group of a punctured surface, we obtain a version of Theorem \ref{hyperbolic}  for the whole group $PB_n(S_{g,p})$ instead of its finite index subgroups. We do not know if statements in Theorem \ref{hyperbolic} and \ref{bettinumber} are valid or not for $PB_n(S_{g,p})$ when $p>0$.

\begin{thm}[Classification of homomorphisms for braid groups of punctured surfaces]\label{puncturehyperbolic}
Let  $n>0$ and $g>1$, and let $\Lambda$  be a torsion-free, non-elementary hyperbolic group.  Any homomorphism $PB_n(S_{g,p})\to \Lambda$  either factors through $p_i$ or its image is a cyclic group.
\end{thm}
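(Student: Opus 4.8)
The plan is to reduce the statement to a question about a distinguished family of normal subgroups, and then to play the rich commuting structure of the surface braid group against the scarcity of commuting subgroups in a hyperbolic group. For each $i$ let $\mathcal{P}_i \lhd PB_n(S_{g,p})$ be the kernel of the map that forgets the $i$-th point; geometrically $\mathcal{P}_i \cong \pi_1(S_{g,p+n-1})$ is the subgroup pushing the $i$-th point around the surface while the others stay fixed, and it is normal. Since $p_i$ is exactly the map forgetting all points other than the $i$-th, its kernel is the subgroup $\langle \mathcal{P}_j : j \ne i\rangle$ generated by the remaining point-pushing subgroups (Fadell–Neuwirth). Writing $N_j := \phi(\mathcal{P}_j)$, it follows that $\phi$ factors through $p_i$ if and only if $N_j = 1$ for every $j \ne i$. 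So the theorem becomes: either $\phi$ has cyclic image, or there is a single index $i_0$ with $N_j = 1$ for all $j \ne i_0$.

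The hyperbolic side supplies three facts about a torsion-free, non-elementary hyperbolic $\Lambda$: every abelian subgroup is trivial or infinite cyclic; the centralizer of a non-elementary subgroup is trivial, so two commuting subgroups cannot both be non-elementary; and an infinite cyclic normal subgroup forces the ambient group to be virtually cyclic, hence (being torsion-free) cyclic. Let $H = \phi(PB_n(S_{g,p}))$. If $H$ is elementary it is cyclic and we are done, so assume $H$ is non-elementary. Each $N_j$ is normal in $H$; were some $N_j$ infinite cyclic, $H$ would be elementary, a contradiction. Hence each $N_j$ is either trivial or non-elementary, and since the $\mathcal{P}_j$ generate $PB_n(S_{g,p})$, at least one — say $N_{i_0}$ — is non-elementary.

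It remains to show $N_j = 1$ for every $j \ne i_0$, which I would do geometrically. Disjointly supported point-pushings commute, so if $\ell$ is a standard generator of $\mathcal{P}_j$ (the $j$-th point running around a handle, a puncture, or another point) supported in a subsurface $U$, and if I can find a subsurface $V$ disjoint from $U$ on which the $i_0$-th pushing subgroup has non-elementary image, then $\phi(\ell)$ commutes with a non-elementary subgroup and therefore $\phi(\ell) = 1$. Because $g > 1$ the surface has enough room: the support of any single generator $\ell$ can be enclosed in a subsurface whose complement still carries a positive-genus piece available to the $i_0$-th point. Running this over all generators of $\mathcal{P}_j$ would kill $N_j$, and hence show that $\phi$ factors through $p_{i_0}$.

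The main obstacle is precisely this localization: a priori the non-elementarity of $N_{i_0}$ could be \emph{concentrated} on a small subsurface $R$ meeting the support of every generator of $\mathcal{P}_j$, so that the $i_0$-th pushing is elementary on every subsurface disjoint from a given $\ell$. To handle this I would pass to a minimal subsurface $R$ on which the $i_0$-th pushing has non-elementary image. Every generator of $\mathcal{P}_j$ supported off $R$ dies as above; placing the $j$-th point outside $R$, the loop pushing it around $\partial R$ is disjoint from the interior of $R$ and so also dies. The surface relation then identifies this boundary loop with the commutator of the $j$-th point's handle generators inside $R$ (e.g. $[\alpha_j,\beta_j]$ for a genus-one $R$), forcing those generators to have commuting, hence abelian, hence cyclic image; by the dichotomy of the second paragraph this residual image is trivial, so $N_j = 1$ after all. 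Making this reduction to a minimal locus precise for an arbitrary subsurface, and checking that the boundary relations suffice to trivialize the residual generators, is the technical heart of the argument, and it is here that the hypothesis $g > 1$ is used in an essential way.
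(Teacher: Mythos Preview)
Your structural setup is correct and appealing: the dichotomy that each normal image $N_j$ is either trivial or non-elementary is valid (an infinite cyclic normal subgroup of a torsion-free hyperbolic group forces the ambient group to be cyclic), and the theorem does reduce to showing that at most one $N_j$ survives. But the ``minimal subsurface'' step is a genuine gap, not a routine technicality. Even granting a minimal $R$ (and it is not clear such an $R$ is unique or canonical), the single boundary relation is too weak to kill all generators of $\mathcal{P}_j$ that enter $R$. If $R$ has genus one and contains $m_{i_0}$ together with some punctures, the generators of $\mathcal{P}_j$ supported in $R$ comprise two handle loops $\alpha,\beta$ \emph{and} a peripheral loop around each point in $R$; the relation $\phi(\partial R)=1$ gives one equation $[\phi(\alpha),\phi(\beta)]\cdot\prod(\text{peripheral images})=1$ among several unknowns. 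Minimality of $R$ constrains $\phi(\mathcal{P}_{i_0}|_{R'})$ for proper $R'\subset R$, but says nothing about $\phi(\mathcal{P}_j|_{R'})$, and the $m_j$-handle loops in $R$ have no evident disjoint-support commutation with the non-elementary core of $N_{i_0}$, since both run over the same handle. Your own final sentence essentially concedes this.

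The paper's argument avoids this localisation problem entirely and proceeds along different lines. It targets not the $\mathcal{P}_j$ but the kernel $K$ of $PB_n(S_{g,p})\to\pi_1(S_{g,p})^n$, normally generated by the twists $T_{ij}$, and reduces by induction to $n=2$. If all twists about curves enclosing one marked point and one puncture die, the map factors through $PB_2(S_g)$ and the closed-surface theorem (where a rationally nontrivial Euler class forces $T_{12}$ to vanish) finishes the job. Otherwise two specific twists $A_{12}$ and $A_{13}$ have nontrivial image, and an explicit manipulation of five lantern and point/disc-pushing relations---using that certain pushes are commutators in the centraliser of a twist and hence die in $\Lambda$---manufactures a contradiction. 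The leverage is the lantern relation together with the Euler-class input from the closed case; your outline invokes neither, which is exactly why the endgame stalls.
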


We now discuss the same problem for the pure braid group $PB_n:=\pi_1(\PConf_n(\CC))$, where $\text{PConf}_n(\mathbb{C}):=\mathbb{C}\times...\times \mathbb{C}-\triangle$ is the space of ordered $n$-tuples of distinct points in $\mathbb{C}$. The corresponding statement in Theorem \ref{puncturehyperbolic} is no longer  true for $PB_n$.

It is well-known that the center of $PB_n$ is a cyclic group which is generated by the Dehn twist $Z_n$ about the boundary curve.  The quotient group $PB_3/Z_3$ is the free group of rank two $F_2$, which is hyperbolic. Thus the quotient homomorphism \[
Q: PB_3\to PB_3/Z_3\cong F_2\] is a surjective homomorphism to a hyperbolic group that does not factor through forgetful maps.

Moreover, there is a natural surjective homomorphism between braid groups that arises from a classical
miracle: ``resolving the quartic”. Indeed, let 
\[R: \PConf_4(\mathbb{C})\to \PConf_3(\mathbb{C})\]
 be the map given by 
 \[R(a, b, c, d) = (ab + cd, ac + bd, ad + bc)\]
 The induced homomorphism on fundamental groups $R_*:PB_4 \to  PB_3$  is a surjective homomorphism. Thus, we obtain another natural homomorphism 
 \[
 RQ:= Q \circ R_*: PB_4\to PB_3\to PB_3/Z_3\cong F_2.\] In this paper, we will prove that $RQ$ and $Q$ are  the  only exceptional homomorphisms. We call a map a \emph{forgetful map} if it is induced by forgetting points in $\PConf_n(\CC)$. For example, the map $f:\PConf_n(\CC)\to\PConf_3(\CC)$ defined by $f(x_1,...,x_n)=(x_1,x_2,x_3)$ is a forgetful map.
 \begin{thm}[Homomorphism classification for braid groups of the disk]
 \label{Pn}
Let $n\ge 3$,$\Lambda$ be a torsion-free, non-elementary hyperbolic group and $\phi: PB_n\to \Lambda$ be a homomorphism. Then either $\phi(PB_n)$ is a cyclic group or there exists $\phi': F_2\to \Lambda$ and a forgetful map $f_3: PB_n\to PB_3$ (resp. $f_4: PB_n\to PB_4$) such that $\phi=\phi'\circ Q\circ f_3$ (resp. $\phi=\phi'\circ RQ\circ f_4$).
 \end{thm}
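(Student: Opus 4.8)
The plan is to combine three standard rigidity facts about a torsion-free, non-elementary hyperbolic group $\Lambda$ with the commuting structure of the standard generators $A_{ij}$ ($1\le i<j\le n$) of $PB_n$. The facts are: (i) the centralizer of an infinite-order element is infinite cyclic; (ii) every abelian subgroup is cyclic, so two commuting nontrivial elements lie in a common cyclic subgroup; and (iii) the centralizer of a non-elementary subgroup is trivial, since a nontrivial element $g$ has infinite order and preserves a unique pair of points of $\partial\Lambda$, so any subgroup commuting with $g$ must preserve that pair and would then be elementary. The first reduction uses the center: since $Z_n$ is central, $\phi(PB_n)$ centralizes $\phi(Z_n)$, so if $\phi(Z_n)$ has infinite order then by (i) $\phi(PB_n)$ is cyclic and we are done. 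Hence I may assume $\phi(Z_n)=1$, and likewise $\phi(Z_S)=1$ for every subset $S$ on which $\phi$ has non-cyclic image, where $Z_S$ is the twist about a disk enclosing the strands of $S$.

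Next I would record the reduction principle for subsets. For $S\subseteq\{1,\dots,n\}$ let $PB_S\le PB_n$ be the partial pure braid group, a retract of $PB_n$ via the forgetful map; applying the center reduction to $\phi|_{PB_S}$ shows that $\phi(PB_S)$ is either cyclic or non-elementary with $\phi(Z_S)=1$. The base case $n=3$ is then immediate: $PB_3\cong F_2\times\langle Z_3\rangle$, so $\phi(Z_3)=1$ forces $\phi$ to factor through $Q\colon PB_3\to PB_3/Z_3\cong F_2$. I expect the genuine difficulty to be the base case $n=4$, which is where the exceptional map $RQ$ must appear. Here I would kill $Z_4$ and analyze $PB_4/Z_4$ directly, using simultaneously the four forgetful maps $PB_4\to PB_3$ and the resolving-the-quartic surjection $R_*\colon PB_4\to PB_3$; the claim to be proved is that a non-cyclic $\phi$ either kills all three generators meeting one strand (and so factors through some $Q\circ f_3$) or is forced by the relations onto the $S_4$-symmetric $R_*$-structure (and so factors through $RQ$). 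Distinguishing exactly these two possibilities, and no others, is the crux of the argument.

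For the inductive step $n\ge 5$, assume $\phi(PB_n)$ non-cyclic. By fact (ii) not all pairs of generators can commute, else the image would be abelian hence cyclic, so some pair $\phi(A_{ij}),\phi(A_{kl})$ fails to commute; these span at most four strands, which I pad to a $4$-element set $Q_0$ with $\phi(PB_{Q_0})$ non-elementary. Applying the $n=4$ base case to $PB_{Q_0}$ gives two cases. If $\phi|_{PB_{Q_0}}$ factors through $Q\circ f_3$, then it kills the three generators meeting some strand $d\in Q_0$, leaving a non-elementary image on the triple $T=Q_0\setminus\{d\}$. Now for any $j\ne d$: if $j\in T$ the generator $A_{dj}$ is already killed, and if $j\notin Q_0$ then $\{d,j\}$ is disjoint from $T$, so $A_{dj}$ commutes with all of $PB_T$ and fact (iii) gives $\phi(A_{dj})=1$. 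Thus $\phi$ kills the kernel of forgetting strand $d$ and factors through $f\colon PB_n\to PB_{n-1}$; applying the inductive hypothesis to the induced map $PB_{n-1}\to\Lambda$ and composing with $f$ finishes this case.

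The remaining, and most delicate, case is when $\phi|_{PB_{Q_0}}$ is of $RQ$ type, so that all four strands of $Q_0$ are essential and none can be forgotten. The generators disjoint from $Q_0$ die by fact (iii) as before, and the goal is to show that the ``one foot in'' generators $A_{kl}$ with $k\in Q_0$, $l\notin Q_0$ also die, which would give $\phi=\phi'\circ RQ\circ f_4$ with $f_4$ forgetting every strand outside $Q_0$. These generators do not commute with all of $PB_{Q_0}$, so fact (iii) does not apply directly; I would instead examine the $5$-strand subgroups $PB_{Q_0\cup\{l\}}$, showing that a nontrivial $\phi(A_{kl})$ together with the rigid $RQ$-pattern on $Q_0$ would produce a non-cyclic image that the $5$-strand analysis permits only when some strand is forgettable, contradicting the essentiality of the four strands of $Q_0$. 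This propagation from the $RQ$ pattern, together with the $n=4$ base case itself, is where I expect essentially all the work to lie; the center reduction and the disjoint-generator killing via trivial centralizers are comparatively formal.
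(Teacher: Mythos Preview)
Your outline shares the paper's opening moves (the center reduction, the $n=3$ base case, induction on $n$), but it is missing the tool that does all the real work: the \emph{lantern relation}. Centralizer facts (your (i)--(iii)) can force images to vanish, but cannot by themselves determine the nonzero values $\phi(A_{ij})$, and it is precisely this determination that separates the $Q\circ f_3$ outcome from the $RQ\circ f_4$ outcome. In the paper, after a $B_n$-conjugation arranges that $a=\phi(T_{12})$ and $b=\phi(T_{23})$ are independent, the dichotomy hinges on $\phi(T_{34})$. If it vanishes, lantern relations such as $T_{123}T_{34}T_{124}=T_{12}T_{1234}$ and $T_{12}T_{24}T_{14}=T_{124}$, combined with centralizer facts, kill every $T_{4j}$ and $\phi$ factors through forgetting strand $4$. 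If $\phi(T_{34})\neq 1$, the same lantern relation forces $\phi(T_{34})=a$, and then $T_{12}T_{23}T_{13}=T_{123}$, $T_{23}T_{34}T_{24}=T_{234}$, $T_{12}T_{24}T_{14}=T_{124}$ pin down the remaining values to match $RQ$ exactly; for $n>4$ one then forgets strand $5$ and inducts. This computation is the entire content of the theorem, and your proposal explicitly defers both the $n=4$ analysis and the ``$RQ$ propagation'' where it lives. Note too that the paper does \emph{not} isolate $n=4$ as a separate base case: the lantern argument runs uniformly for all $n\ge 4$, which sidesteps the propagation difficulty you anticipate in your last paragraph.

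There is also a concrete slip in your inductive step. You claim that $\{d,j\}\cap T=\emptyset$ implies $A_{dj}$ commutes with $PB_T$. For the standard generators this is false: disjoint index \emph{sets} do not give disjoint curves (for instance $A_{13}$ and $A_{24}$ do not commute). The paper avoids this by first conjugating so that the non-commuting pair is $T_{12},T_{23}$; the strand to be dropped is then always $4$ (or $5$ in the $RQ$ case), and for $j>4$ the interval $[4,j]$ genuinely lies to the right of $\{1,2,3\}$, so the needed commutations hold---and when forgetting strand $5$ the paper switches to the ``upper'' generators $T'_{5j}$ to get disjointness from $\{1,2,3,4\}$. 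Your abstract $Q_0$ with an internally determined forgettable strand $d$ does not come with such an ordering for free.
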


Theorem \ref{Pn} is a generalization of the result \cite[Theorem 3.5]{DCohen}, which is a special case when $\Lambda$  is a free group. We remark that the statement in Theorem \ref{Pn} is not true when $\Lambda$ is a relative hyperbolic group.  Let $B_n(S^2)$ be the braid group of the two sphere $S^2$, which is the fundamental group of the space of unordered $n$-tuples of distinct points in $S^2$.  Deligne--Mostow \cite{DeligneMostow} constructed a homomorphism $E: B_n(S^2)\to SU(k,1)$ such that the image is a lattice. We also know that  $B_n(S^2)$ contains $PB_{n-2}$ as a subgroup and the restriction $E$ to $PB_{n-2}$ does not factor through a forgetful map.  Our proof does not apply when $\Lambda$ is a relative hyperbolic group because we strongly use a property of hyperbolic groups that the centralizer of any nontrivial element is cyclic. This property does not hold for relative hyperbolic groups.
 \vskip 0.3cm
 
At last, we ask the following natural question.
\begin{qu}
Are theorems \ref{hyperbolic}, \ref{bettinumber} and \ref{Pn} true  for all finite index subgroups of $PB_n(S_{g,p})$ and $PB_n$?
\end{qu}

\para{Comparing the methods in \cite{Chen}, \cite{DCohen} and the current paper}
The results in  \cite{Chen} and \cite{DCohen} only work for the free group $F_m$ since they use the property that the $H^1(F_m;\ZZ)$ is an isotropic subspace for the cup product. Since hyperbolic groups can be perfect, it seems impossible that this idea can be used to prove the results in this paper. Moreover, in \cite{Chen}, we use the method of F.E.A. Johnson \cite{FEAJohnson} and Salter \cite{Salter}, which strongly uses a special property of free groups and surface groups:  finitely-generated normal subgroups of either  free groups or surface groups are either trivial or have finite index. This property is certainly not true for general hyperbolic groups. For example, let $M$  be a 3-dimensional hyperbolic manifold that is  a surface bundle over the circle. Then $\pi_1(M)$ contains a surface subgroup as a nontrivial finitely-generated, infinite index normal subgroup. The novelty of this paper is the observation that the rigidity results such as Theorem \ref{hyperbolic} and Theorem \ref{Pn} are not consequences of the classification of isotropic subspaces of the first homology, but rather the rich commuting and lantern relations of the subgroups of the pure braid groups.

\para{Acknowledgment} The author would like  to thank Vlad Markovic for very useful discussions and Oishee Banerjee, Benson Farb, Dan Margalit and Nick Salter for comments on the draft.

\section{Obstructing homomorphisms to hyperbolic groups}
In this section, we discuss tools to obstruct homomorphisms to hyperbolic groups. We discuss the rigidity of homomorphisms to hyperbolic groups from two classes of groups: the $\ZZ$-central extensions and the direct product of groups.
\subsection{The Euler class of a $\ZZ$-central extensions}
For a $\ZZ$-central extension
\begin{equation}\label{ce}
1\to \ZZ\to G\xrightarrow{p} \overline{G}\to 1,
\end{equation}
we can associate an Euler class $Eu(p)\in H^2(\overline{G};\mathbb{Z})$ (see e.g., \cite[Chapter 4]{Brown}). We know that the exact sequence \eqref{ce} splits if and only if $Eu(p)=0\in H^2(G;\mathbb{Z})$. On the other hand, $Eu(p)\neq 0$ if and only if a nontrivial element of the $\mathbb{Z}$-subgroup of $G$ is a commutator in $G$.

We need the following standard fact about torsion-free, non-elementary hyperbolic groups.
\begin{fact}
If $\Lambda$ is a torsion-free hyperbolic group, then the centralizer of $1\neq h\in \Lambda$ is a cyclic group. If $\Lambda$ is not torsion-free and $h\in  \Lambda$ has infinite order, then the centralizer of $\Lambda$ is virtually cyclic.
\end{fact}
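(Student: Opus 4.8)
The plan is to prove the second, more general statement — that the centralizer of an infinite-order element is virtually cyclic — and then deduce the torsion-free case, since when $\Lambda$ is torsion-free every $1\neq h$ has infinite order and a torsion-free virtually cyclic group is infinite cyclic. Throughout I would work with the geometric action of $\Lambda$ on its Cayley graph $X$, which is a proper geodesic $\delta$-hyperbolic space, together with its Gromov boundary $\partial X$.

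First I would recall the boundary dynamics of an infinite-order element. Such an $h$ acts on $X$ as a loxodromic isometry: it has a quasi-geodesic axis $\gamma$, on which $\langle h\rangle$ acts cocompactly, and exactly two fixed points $h^{+},h^{-}\in\partial X$. The key observation is that any $g\in C(h)$ permutes $\Fix(h)=\{h^{+},h^{-}\}$: since $g$ conjugates $h$ to itself, $g\gamma$ is again a quasi-axis of $h$, so by stability of quasi-geodesics (the Morse lemma) $g\gamma$ lies within bounded Hausdorff distance of $\gamma$, and therefore $g$ fixes the pair $\{h^{+},h^{-}\}$. Hence $C(h)$ is contained in the setwise stabilizer $E:=\Stab_{\Lambda}\{h^{+},h^{-}\}$.

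Next I would show that $E$ is virtually cyclic. Passing to the index-at-most-two subgroup $E_{0}$ that fixes $h^{+}$ and $h^{-}$ individually, every element of $E_{0}$ coarsely preserves $\gamma$ and acts on it by a translation; recording the signed asymptotic translation length gives a homomorphism $\tau:E_{0}\to\mathbb{R}$. Its kernel consists of elements moving a basepoint of $\gamma$ a uniformly bounded amount, so by properness of the action the kernel is finite; and the image is a discrete subgroup of $\mathbb{R}$, again by properness together with the cocompactness of $\langle h\rangle$ on $\gamma$, hence infinite cyclic. Therefore $E_{0}$, and with it $E\supseteq C(h)$, is virtually cyclic.

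Finally, in the torsion-free case $C(h)$ is a torsion-free virtually cyclic group, and such a group is infinite cyclic (the finite kernel above must be trivial, and the two-ended, end-reversing alternatives would force torsion), so $C(h)\cong\ZZ$. The main technical obstacle is the middle step: controlling the quasi-axis of $h$ and proving that commuting — more generally, commensurating — elements preserve it up to bounded Hausdorff distance. This rests squarely on the stability of quasi-geodesics in $\delta$-hyperbolic spaces and on the properness of the geometric action, which is exactly what forces both the finiteness of the kernel and the discreteness of the translation-length image. All of these ingredients are standard (see, e.g., Gromov, Ghys--de la Harpe, or Bridson--Haefliger), which is why the statement is quoted here as a fact.
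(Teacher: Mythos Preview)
Your argument is correct and is the standard proof of this result: classify $h$ as loxodromic, observe that $C(h)$ preserves the pair of boundary fixed points (via the Morse lemma), and then use properness of the action to show the stabilizer of this pair is virtually cyclic; the torsion-free reduction at the end is also right. Note, however, that the paper does not actually prove this statement---it is recorded as a \emph{Fact} and simply quoted as a well-known property of hyperbolic groups---so there is no proof in the paper to compare against; your write-up is essentially the argument one finds in the references you cite (Gromov, Ghys--de~la~Harpe, Bridson--Haefliger).
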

We call two elements $a$ and $b$ in a hyperbolic group \emph{independent} if their hyperbolic axes are different. If $a$ and $b$ are independent, then the intersection of their centralizers is the identity subgroup. The following theorem describes the rigidity of homomorphisms from $G$ to hyperbolic groups.
\begin{lem}\label{centralextension}
Let $\Lambda$ be  $\ZZ$ or a torsion-free, non-elementary hyperbolic group and $G$ be the group as in the exact sequence \eqref{ce}. If $Eu(p)\in H^2(\overline{G};\mathbb{Q})$ is nontrivial, then any homomorphism $\phi:G\to \Lambda$ factors through $p$; i.e.,  we have the diagram 
$
\xymatrix{G\ar[r]^{\phi}\ar[d]^p & \Lambda \\
\overline{G} \ar@{-->}[ru]&.
}
$
\end{lem}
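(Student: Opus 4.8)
The plan is to reduce everything to the single statement that $\phi$ annihilates the central subgroup, i.e. that $\phi(z)=1$ for a generator $z$ of the central $\ZZ$ in \eqref{ce}. Once this is shown, $\ZZ\subseteq\ker\phi$, so $\phi$ descends to $\overline{G}=G/\ZZ$ and factors through $p$, which is exactly the asserted diagram.

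First I would translate the hypothesis $Eu(p)\neq 0\in H^2(\overline{G};\QQ)$ into a statement about the abelianization $H_1(G;\ZZ)$. The five-term exact sequence of the central extension \eqref{ce} reads
\[
H_2(\overline{G};\ZZ)\xrightarrow{\ \partial\ }\ZZ\longrightarrow H_1(G;\ZZ)\longrightarrow H_1(\overline{G};\ZZ)\longrightarrow 0,
\]
and the transgression $\partial$ is, up to sign, evaluation against the Euler class. Hence $Eu(p)$ is non-torsion---equivalently nonzero over $\QQ$---precisely when $\partial$ has nonzero image, that is, precisely when the image of $z$ in $H_1(G;\ZZ)$ is a torsion element. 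Concretely this produces an integer $m\neq 0$ with $z^m\in[G,G]$, which is the rational sharpening of the commutator criterion recorded just before the lemma.

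Next I would argue by contradiction, assuming $\phi(z)\neq 1$. Since $z$ is central in $G$, the element $\phi(z)$ commutes with every element of $\phi(G)$, so $\phi(G)\subseteq C_\Lambda(\phi(z))$. If $\Lambda=\ZZ$ then $\phi(G)$ is abelian for free; if $\Lambda$ is torsion-free non-elementary hyperbolic then the centralizer of the nontrivial element $\phi(z)$ is cyclic by the fact recorded above, so $\phi(G)$ is again abelian. In either case $\phi([G,G])=1$, and applying this to $z^m\in[G,G]$ yields $\phi(z)^m=1$. As $\Lambda$ is torsion-free and $m\neq 0$, this forces $\phi(z)=1$, contradicting the assumption; hence $\phi(z)=1$ and $\phi$ factors through $p$.

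The delicate point is the reduction carried out in the second paragraph, where it is essential to use nontriviality over $\QQ$ rather than over $\ZZ$. A torsion but nonzero integral Euler class would leave $z$ of infinite order in $H_1(G;\ZZ)$, allowing an abelian (indeed cyclic) $\phi$ that is injective on the center, and the conclusion would fail. Rational nontriviality is exactly what guarantees that some nonzero power of $z$ is a product of commutators, and the torsion-freeness of $\Lambda$ is exactly what upgrades $\phi(z)^m=1$ to $\phi(z)=1$. The remaining steps are formal once these two inputs---the interpretation of $\partial$ via the Euler class and the cyclicity of centralizers in hyperbolic groups---are in hand, so I expect no serious obstacle beyond invoking them correctly.
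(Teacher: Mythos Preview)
Your proposal is correct and follows essentially the same approach as the paper's proof: both reduce to showing $\phi(z)=1$ by first using centrality of $z$ and cyclicity of centralizers in $\Lambda$ to force $\phi(G)$ abelian, and then using rational nontriviality of $Eu(p)$ to produce $m\neq 0$ with $z^m\in[G,G]$, whence $\phi(z)^m=1$ and torsion-freeness finishes. Your five-term sequence justification of the latter step is a bit more explicit than the paper's appeal to $H^1(G;\QQ)=H^1(\overline{G};\QQ)$ (equivalently, to the commutator criterion stated just before the lemma), but the argument is the same.
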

\begin{proof}
Let $\alpha$ be a generator of the $\ZZ$ subgroup of $G$ as in \eqref{ce}. Since $\alpha$ is central in $G$, we know $\phi(G)$ should lie in the centralizer of $\phi(\alpha)$, which is a cyclic group. When $Eu(p)\neq 0\in H^2(\overline{G};\mathbb{Q})$, we know that $H^1(G;\mathbb{Q})=H^1(\overline{G};\mathbb{Q})$, which implies that any homomorphism $G\to \mathbb{Z}$ factors through $p$. Another way to see this is that the centralizer of some power of $\alpha$ is a product of commutators in $G$, which means that any homomorphism $G\to \ZZ$ factors through $p$.
\end{proof}
When $Q$ is a finite index subgroup of $G$, we obtain a similar result for $Q$.
\begin{cor}
Let $G$ satisfy the exact sequence \eqref{ce} and  $Q<G$ be a finite index subgroup. Let $\Lambda$ be either $\ZZ$ or a torsion-free, non-elementary hyperbolic group. If $Eu(p)\neq 0 \in H^2(\overline{G};\mathbb{Q})$,  
then any homomorphism $Q\to  \Lambda$ factors through $p$.
\end{cor}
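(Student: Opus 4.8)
The plan is to realize $Q$ itself as a $\ZZ$-central extension and then apply Lemma~\ref{centralextension} to $Q$. First I would note that since $Q$ has finite index in $G$, the intersection $Q\cap\ZZ$ has finite index in the central $\ZZ=\langle\alpha\rangle$ of \eqref{ce}; hence $Q\cap\ZZ=\langle\alpha^m\rangle\cong\ZZ$ for some $m\geq 1$, and it is central in $Q$ because it lies in the center of $G$. Writing $\overline{Q}:=p(Q)$, a finite-index subgroup of $\overline{G}$, we obtain a $\ZZ$-central extension
\[
1\to Q\cap\ZZ\to Q\xrightarrow{\,p\,}\overline{Q}\to 1 .
\]
Since the kernel of $p|_Q$ is exactly $Q\cap\ZZ$, ``factoring through $p$'' for a homomorphism out of $Q$ means descending to $\overline{Q}$, which is precisely the conclusion of Lemma~\ref{centralextension} applied to this extension. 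Thus the entire statement reduces to showing that $Eu(p|_Q)\neq 0\in H^2(\overline{Q};\QQ)$.

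To prove this nonvanishing I would compare $Eu(p|_Q)$ with the restriction $i^*Eu(p)$, where $i:\overline{Q}\hookrightarrow\overline{G}$. The restriction of the extension \eqref{ce} to $\overline{Q}$ is the pullback central extension
\[
1\to\ZZ\to p^{-1}(\overline{Q})\xrightarrow{\,p\,}\overline{Q}\to 1 ,
\]
whose Euler class is $i^*Eu(p)$ by naturality of the Euler class under pullback. Now $Q\subseteq p^{-1}(\overline{Q})$ surjects onto $\overline{Q}$, so any set-theoretic section $s:\overline{Q}\to Q$ of $p|_Q$ is simultaneously a section of $p^{-1}(\overline{Q})\to\overline{Q}$; the two extensions therefore have literally the same $\ZZ$-valued cocycle $c(x,y)=s(x)s(y)s(xy)^{-1}$. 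The only difference is the chosen generator: for $p^{-1}(\overline{Q})$ one uses $\alpha$, whereas the values of $c$ land in $Q\cap\ZZ=\langle\alpha^m\rangle$, so for $Q$ one uses $\alpha^m$. Tracking this identification shows $i^*Eu(p)=m\cdot Eu(p|_Q)$ in $H^2(\overline{Q};\ZZ)$, so over $\QQ$ the two classes differ by the nonzero scalar $m$.

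It then remains to check $i^*Eu(p)\neq 0\in H^2(\overline{Q};\QQ)$. This follows from the standard transfer argument: for the finite-index inclusion $i:\overline{Q}\hookrightarrow\overline{G}$ there is a corestriction map with $\mathrm{tr}\circ i^*=[\overline{G}:\overline{Q}]\cdot\mathrm{id}$ on $H^*(\overline{G};\QQ)$, so $i^*$ is injective on rational cohomology; since $Eu(p)\neq 0\in H^2(\overline{G};\QQ)$ by hypothesis, we get $i^*Eu(p)\neq 0$. Combining with the previous paragraph yields $Eu(p|_Q)=\tfrac{1}{m}\,i^*Eu(p)\neq 0$ over $\QQ$, and Lemma~\ref{centralextension} finishes the proof.

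I expect the main obstacle to be the bookkeeping in the middle step, namely verifying that $i^*Eu(p)=m\cdot Eu(p|_Q)$ with the correct factor of $m$ coming from the two different cyclic generators $\alpha$ and $\alpha^m$. The reduction to $Q$ and the transfer injectivity are routine, but one must be careful that the common section $s$ genuinely identifies the two cocycles and that the change of generator merely rescales the class rather than altering it.
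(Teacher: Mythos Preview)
Your proposal is correct and follows essentially the same route as the paper: realize $Q$ as a $\ZZ$-central extension over $\overline{Q}=p(Q)$, show its Euler class is rationally nonzero via the injectivity of $H^2(\overline{G};\QQ)\to H^2(\overline{Q};\QQ)$ for finite-index subgroups, and then invoke Lemma~\ref{centralextension}. The paper's proof is terser and does not spell out the factor $m$ or the transfer argument, but your more careful bookkeeping fills exactly the details the paper leaves implicit.
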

\begin{proof}
 Let $Q$ be a finite index subgroup of $G$. Then we obtain a $\ZZ$-central extension
\begin{equation}\label{ce2}
1\to \ZZ\to Q\xrightarrow{p'=p|_Q} \overline{Q}\to 1.
\end{equation}
If $Eu(p)\neq 0\in H^2(\overline{G};\mathbb{Q})$, then $Eu(p')$ is nontrivial as well. This follows from the fact that the map $H^2(\overline{G};\QQ)\to H^2(\overline{Q};\QQ)$ is injective when $\overline{Q}$ is a finite index subgroup in $\overline{G}$. Then some nontrivial element in the $\mathbb{Z}$-subgroup of $G$ is also a commutator in $Q$. Then the corollary follows from Lemma \ref{centralextension}.
\end{proof}

The above method has been used in the following result of Putman and Bridson \cite[Theorem A]{Putman} \cite[Theorem A]{Bridson}, which we recall here.   Let $\Mod(S_g)$  be the the mapping class group of $S_g$; i.e., the group of connected components of the homeomorphism group of $S_g$. See \cite{FM} for an introduction on mapping class groups.
\begin{cor}[Putman, Bridson]
Let $g>2$ and let $\Gamma<\Mod(S_g)$ be a finite index subgroup. Let $\Lambda$ be either $\ZZ$ or a  torsion-free, non-elementary hyperbolic group. Then any homomorphism $\phi: \Gamma\to \Lambda$ satisfies  $\phi(T)=1$ for $T\in \Mod(S_g)$ a power of Dehn twist that is in $\Gamma$.
\end{cor}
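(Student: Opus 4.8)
The plan is to deduce this from the corollary above by exhibiting, for each Dehn twist, a $\ZZ$-central extension sitting inside $\Mod(S_g)$ whose central generator is (a power of) that twist and whose Euler class is rationally nontrivial. Write $T=T_c^m$ for a simple closed curve $c$ and $m\neq 0$ (we may assume $c$ is essential, else $T_c=1$). It suffices to produce a subgroup $H<\Mod(S_g)$ fitting into a central extension $1\to\langle T_c\rangle\to H\to\overline H\to 1$ with $\langle T_c\rangle\cong\ZZ$ and $Eu\neq 0\in H^2(\overline H;\QQ)$. Granting this, $\Gamma\cap H$ has index at most $[\Mod(S_g):\Gamma]<\infty$ in $H$, so the corollary above applies to $G=H$ and $Q=\Gamma\cap H$ and shows that $\phi|_{\Gamma\cap H}$ factors through $p\colon H\to\overline H$. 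Since $T=T_c^m$ lies in $\langle T_c\rangle\cap(\Gamma\cap H)=\ker(p)\cap Q$, it maps to $1$ under $p$, and hence $\phi(T)=1$.

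To build $H$ I would cut along $c$. If $c$ is separating, let $W$ be the complementary component of genus $h$ chosen so that $h\ge 2$; this is possible because each side of an essential separating curve has genus $\ge 1$, so for $g\ge 3$ the larger side has genus $\ge\lceil g/2\rceil\ge 2$. Then $W$ is a subsurface of genus $\ge 2$ with a single boundary component isotopic to $c$, and extending mapping classes by the identity gives a homomorphism $\Mod(W)\to\Mod(S_g)$ sending the boundary twist $T_{\partial W}$ to $T_c$. If $c$ is nonseparating, let $W=S_g\smallsetminus c$, a surface of genus $g-1\ge 2$ with two boundary components $b_1,b_2$; regluing induces $\Mod(W)\to\Mod(S_g)$ with $T_{b_1},T_{b_2}\mapsto T_c$. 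In either case let $H$ be the image. Every element of $H$ fixes $c$, so $T_c$ is central in $H$ and of infinite order, yielding the central extension with $\overline H=H/\langle T_c\rangle$.

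It remains to verify $Eu\neq 0$ rationally, and by the criterion used in Lemma \ref{centralextension} this amounts to showing that some power of $T_c$ is a product of commutators in $H$. Here I would invoke the standard computation (see \cite{FM}) that the mapping class group of a surface of genus $\ge 2$, with any number of boundary components, has finite first homology. Thus $[T_{\partial W}]=0$ in $H_1(\Mod(W);\QQ)$, so a power $T_{\partial W}^{k}$ is a product of commutators in $\Mod(W)$; pushing this relation forward through $\Mod(W)\to\Mod(S_g)$ expresses $T_c^{k}$ as a product of commutators in $H$. This gives $Eu\neq 0\in H^2(\overline H;\QQ)$ and completes the argument.

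The main obstacle is exactly this last step: the rational nonvanishing of the Euler class rests on the (standard but genuinely nontrivial) fact that genus-$\ge 2$ mapping class groups have finite abelianization, together with the geometric point that $W$ can be arranged to have genus at least $2$. This is precisely where the hypothesis $g>2$ is used, and why the genus-one situations must be avoided: for them $\Mod(\Sigma_{1,1})$ has infinite abelianization, the boundary twist is nontrivial in $H_1(\,\cdot\,;\QQ)$, and the Euler class dies rationally.
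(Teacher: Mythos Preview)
Your argument is correct and follows the same strategy as the paper's sketch: exhibit a $\ZZ$-central extension inside $\Mod(S_g)$ with $T_c$ as central generator and rationally nontrivial Euler class, then apply the preceding corollary to $Q=\Gamma\cap H$. The only difference is cosmetic: the paper takes $H$ to be the full centralizer $C(T_c)$, whereas you use the smaller subgroup given by the image of $\Mod(W)$ for a genus $\ge 2$ complementary piece; in both cases the rational nontriviality of the Euler class reduces to the same standard fact that mapping class groups in genus $\ge 2$ have finite abelianization, so a power of the boundary twist is a product of commutators.
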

\begin{proof}[Sketch of the proof]
The centralizer of a Dehn twist is a $\ZZ$-central extension of a short exact sequence where the Euler class is rationally nontrivial. See \cite{Putman} for more details. 
\end{proof}

Notice that Putman proved the result when the target is $\mathbb{Z}$ and Bridson proved it for actions on CAT(0)-space.  The above method does not work for $h\in \Mod(S_g)$ when $h^m$ is not a multi-twist (a product of powers of Dehn twist about disjoint curves) for some $m$. This is because if no power of $h$ is a multi-twist, a power of $h$ is a pseudo-Anosov element on a subsurface of $S_g$, which is never a product of commutators in its centralizer.

\subsection{Product of groups}
We now discuss homomorphisms to $\Lambda$ from a direct product of groups, which is an extension of \cite[Lemma 5.1]{Chen2}.
\begin{thm}\label{producthyperbolic}
Let $G_1,...,G_n$ be groups and let $\Gamma<G_1\times...\times G_n$ be a finite index subgroup. Let $\pi_i:\Gamma\to G_i$ be the $i$th projection map and let  $\Gamma_i$ be the image of $\pi_i$. 

\begin{enumerate}
\item The following decomposition holds:
\[H^1(\Gamma;\QQ)=\bigoplus_i \pi_i^*(H^1(\Gamma_i;\QQ)).
\]
\item
Let $\Lambda$ be a torsion-free, non-elementary hyperbolic group. Then any homomorphism $\phi:\Gamma\to \Lambda$ either factors through $\pi_i$ or its image is a cyclic group. 
\end{enumerate}
\end{thm}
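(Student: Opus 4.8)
\para{Proof proposal}
The plan is to reduce both parts to the family of ``diagonal'' normal subgroups $N_i:=\Gamma\cap(1\times\cdots\times G_i\times\cdots\times 1)$, $i=1,\dots,n$, and to exploit two features of this family. First, since $\Gamma$ has finite index in $G_1\times\cdots\times G_n$, each $N_i$ has finite index in $G_i$; as $N_i\le\Gamma_i\le G_i$, also $[\Gamma_i:N_i]<\infty$. Second, because the $i$-th factor is normal in the product, $N_i\lhd\Gamma$ and $N_i\lhd\Gamma_i$, distinct $N_i$ and $N_j$ commute elementwise, and the product $N_1\cdots N_n$ is a finite-index normal subgroup of $\Gamma$ with finite quotient $Q:=\Gamma/(N_1\cdots N_n)$. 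Note also that for $j\ne i$ the projection $\pi_j$ kills $N_i$, while $\pi_i|_{N_i}$ is the inclusion $N_i\hookrightarrow\Gamma_i$. I would establish these bookkeeping facts first, as both parts rest on them.

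For part (1), I would work over $\QQ$ throughout so that cohomology of the finite group $Q$ vanishes in positive degrees; the Lyndon--Hochschild--Serre spectral sequence then gives $H^1(\Gamma;\QQ)\cong H^1(N_1\cdots N_n;\QQ)^Q$ with the restriction map as the isomorphism. By the K\"unneth formula $H^1(N_1\cdots N_n;\QQ)=\bigoplus_i H^1(N_i;\QQ)$, and since each summand is $\Gamma$-invariant (as $N_i\lhd\Gamma$) while $N_j$ with $j\ne i$ acts trivially on $H^1(N_i;\QQ)$, the $Q$-action respects the splitting and, on the $i$-th summand, factors through $\Gamma_i/N_i$. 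Applying the same invariants computation to $N_i\lhd\Gamma_i$ identifies $H^1(N_i;\QQ)^{\Gamma_i}$ with the image of restriction $H^1(\Gamma_i;\QQ)\to H^1(N_i;\QQ)$; tracing through $\pi_i$ shows this image is exactly $\pi_i^*(H^1(\Gamma_i;\QQ))$ viewed inside $H^1(\Gamma;\QQ)$. Directness of the sum is the easy direction: restricting a relation $\sum_i\pi_i^*(\alpha_i)=0$ to $N_j$ leaves only the $j$-th term, and injectivity of restriction over $\QQ$ for the finite-index inclusion $N_j\le\Gamma_j$ forces each $\alpha_j=0$.

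For part (2), the decisive input is the Fact recalled above: in a torsion-free non-elementary hyperbolic group the common centralizer of two independent elements is trivial, so the centralizer of any non-elementary subgroup is trivial. Since $\phi(N_i)$ and $\phi(N_j)$ commute for $i\ne j$, this yields a dichotomy. If some $\phi(N_{i_0})$ is non-elementary, then every $\phi(N_j)$ with $j\ne i_0$ lies in its centralizer and hence is trivial. Otherwise every $\phi(N_i)$ is cyclic; then $\phi(N_1\cdots N_n)$ is generated by pairwise-commuting cyclic groups, hence abelian, hence cyclic (abelian subgroups of torsion-free hyperbolic groups are cyclic). As $\phi(N_1\cdots N_n)$ has finite index in $\phi(\Gamma)$, the image $\phi(\Gamma)$ is torsion-free and virtually cyclic, therefore cyclic, which is the second conclusion.

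It remains, in the non-elementary case, to upgrade ``$\phi$ kills $N_j$ for all $j\ne i_0$'' to ``$\phi$ factors through $\pi_{i_0}$'', i.e.\ to show $\phi$ annihilates the full kernel $K:=\ker\pi_{i_0}=\Gamma\cap\prod_{j\ne i_0}G_j$. This finite-index gap is the main obstacle, since the $N_j$ with $j\ne i_0$ only generate the finite-index subgroup $\prod_{j\ne i_0}N_j$ of $K$. Here I would use torsion-freeness of $\Lambda$ crucially: $\phi$ kills the finite-index subgroup $\prod_{j\ne i_0}N_j\le K$, so $\phi(K)$ is a finite group inside a torsion-free group and is therefore trivial; hence $\phi$ descends through $\pi_{i_0}$ to $\Gamma_{i_0}$. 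The same ``finite-index-to-everything'' passage (via torsion-freeness here, and via virtually-cyclic-implies-cyclic in the other case) is exactly where the hypotheses on $\Lambda$ are indispensable, and I expect it to be the most delicate point of the write-up.
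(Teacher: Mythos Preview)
Your proposal is correct and follows essentially the same strategy as the paper: both arguments introduce the normal subgroups $N_i=\Gamma\cap G_i$ (the paper's $K_i$), use their pairwise commutativity together with transfer/invariants over $\QQ$ for part~(1), and use the cyclic-centralizer property of torsion-free hyperbolic groups for part~(2). The only packaging difference is that the paper reduces by induction to $n=2$ (where $N_1=\ker\pi_2$ and $N_2=\ker\pi_1$ on the nose, so your ``upgrade'' step from $\prod_{j\ne i_0}N_j$ to $\ker\pi_{i_0}$ via torsion-freeness is unnecessary), and phrases the dichotomy in part~(2) as ``some $\phi(K_i)$ trivial versus both nontrivial'' rather than your ``some $\phi(N_i)$ non-elementary versus all cyclic''; both slicings yield the same conclusion.
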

\begin{proof}
We first assume that $\pi_i$ is surjective for all $i$, otherwise we replace the group $G_i$ with $\Gamma_i$. By induction, all we need is to prove the case when $n=2$. Let $K_1:=\Gamma\cap (G_1\times 1)$  and $K_2:\Gamma\cap (1\times G_2)$. We have the following exact sequences
\[
1\to K_1\to \Gamma\to G_2\to 1,
\]
\[
1\to K_1\to G_1\to G_1/K_1=G_2/K_2=G_1\times G_2/\Gamma\to 1.
\]
Now,
\[
H^1(K_1;\QQ)^{G_2}=H^1(K_1;\QQ)^{\Gamma}=H^1(K_1;\QQ)^{{\pi}_1(\Gamma)}=H^1(K_1;\QQ)^{G_1}=H^1(G_1;\QQ),
\]
which implies $H^1(\Gamma;\QQ)=\bigoplus_i \pi_i^*(H^1(\Gamma_i;\QQ))$.

For the second statement, let $\phi: \Gamma\to \Lambda$ be a homomorphism. Since $K_i$ is a finite index subgroup of $G_i$, it implies that $\Gamma$ contains $K:=K_1\times K_2$ as a finite index subgroup. To prove that $\phi$ factors through either $\pi_1$ or $\pi_2$, we only need to show that $\phi$ is trivial on either $K_1$ or $K_2$. If not, suppose that there exists $x_i\in K_i$ such that $\phi(x_i)\neq 1$. Since $x_1$ and $x_2$ commute with each other, their centralizers are the same cyclic group $C$. Since $K_2$ commutes with $x_1$ and $K_1$ commutes with $x_2$, we know that $\phi(K_1\times K_2)$ lies in $C$, which is a cyclic group. 

Now assume that $\phi(K)$ is a cyclic group generated by $a\in \Lambda$, but $\phi(\Gamma)$ is not cyclic. There is an element $\gamma\in \Gamma$ such that $\phi(\gamma)=b$ that is independent with $a$. Thus, no power of $a$ is in the group generated by $b$.  However, since $K$ is a finite index subgroup of $\Gamma$ and $\phi(K)$ is cyclic, we know that $\phi(\Gamma)$ is a finite extension of a cyclic group, which implies that a power of $a$ should lie in the group generated by $b$. This  contradicts our assumption on $a$ and $b$.
\end{proof}

\section{Surface braid groups}
In this section, we discuss first betti numbers of covers of surface braid groups and their homorphisms to torsion-free, non-elementary hyperbolic groups. 

\subsection{The case of the braid group of a closed surface}
\begin{proof}[Proof of Theorem \ref{bettinumber} and Theorem \ref{hyperbolic}]
Let 
\[e: \PConf_n(S_g)\to (S_g)^n\] be the natural embedding, where the image is the complement of the diagonal. We know that $PB_n(S_g)$ satisfies the following exact sequence by Lefschetz hyperplane theorem \[
1\to K\to PB_n(S_g)\xrightarrow{e_*} \pi_1(S_g)^n\to 1.
\]
Observe that $K$ is normally generated by $\{T_{ij}\}$, which geometrically is a loop around the diagonal 
\[
\triangle_{ij}:=\{(x_1,...,x_n)|x_i=x_j\}\subset S_g^n.\]
We will prove a stronger theorem that for any finite index subgroup $\Gamma<PB_n(S_g)$ satisfying $K<\Gamma$, any  homomorphism $\Gamma\to \Lambda$ factors through $e_*$ for $\Lambda$ a torsion-free, non-elementary hyperbolic group. Then Theorem \ref{bettinumber} and \ref{hyperbolic} follows from Theorem \ref{producthyperbolic},  \cite[Lemma 2.1]{Chen} and the fact that \[
e_{*}:H^1(\PConf_n(S_g);\mathbb{Q})\to H^1((S_g)^n;\mathbb{Q})\] is an isomorphism.
 
Let $\rho: \Gamma\to \Lambda$ be a homomorphism. The goal is to prove that $\rho(gT_{ij}g^{-1})=0$ for any $g\in PB_n(S_g)$ (notice that $g$ may not be in $\Gamma$, but $gT_{ij}g^{-1}\in K\subset \Gamma$). This proves that $\rho(K)=0$, which implies that $\Lambda$ factors through $e_*$.

Let us consider the centralizer of $T_{ij}$. The group $PB_n(S_g)$ can also be thought of as a point-pushing subgroup of the mapping class group $\Mod(S_{g,n})$, the connected component of the group of homeomorphisms of $S_g$ fixing $n$ marked points $m_1,...,m_n$ (see, e.g., \cite[Chapter 9]{FM} for more background on mapping class groups). Under this interpretation, the element $T_{ij}$ can also be thought as the Dehn twist around a simple closed curve surrounding points $m_i$ and $m_j$. The centralizer of $T_{ij}$ inside $PB_n(S_g)$ satisfies the following short exact sequence
\begin{equation}\label{1}
1\to \ZZ\xrightarrow{T_{ij}} C(T_{ij})\to PB_{n-1}(S_g)\to 1.
\end{equation}
Let $US_g$ be the unit circle bundle of the genus $g$ surface. The above short exact sequence is actually a pull-back of the following exact sequence
\begin{equation}\label{2}
1\to \ZZ\to \pi_1(US_g)\to \pi_1(S_g)\to 1
\end{equation}
by a forgetful map that forgets all points except the $i$th and $j$th points. To check whether the Euler class of \eqref{1} is trivial or not, we only need to compute the pull-back of the Euler class from \eqref{2}, which is a multiple of the fundamental class of $S_g$. By  \cite[Lemma 3.1]{Chen}, we see that the pull-back of the fundamental class by any forgetful map $p_i$ is not rationally trivial. This implies that the Euler class of \eqref{1} is nontrivial, which shows that $T_{ij}$ vanish under any homomorphism $C(T_{ij})\to\Lambda$ by Proposition \ref{centralextension}.  The same method applies to all conjugates of $T_{ij}$ as well. Since conjugates of all of the $T_{ij}$ generate $K$, we know that $\rho$ is trivial on $K$. 
\end{proof}

\subsection{The case of the braid group of a closed surface}
We now prove Theorem \ref{puncturehyperbolic}. Even though the idea is similar but the proof is more technical. 
\begin{proof}[Proof of Theorem \ref{puncturehyperbolic}]
For the same reason as the proof of Theorem \ref{bettinumber} and \ref{hyperbolic}, we have the following exact sequence
\[
1\to K\to PB_n(S_{g,p})\to \pi_1(S_{g,p})\times...\times \pi_1(S_{g,p})\to 1.
\]
We now consider $PB_n(S_{g,p})$ as a subgroup of $\Mod(S_{g,p+n})$. Let $\{m_1,...,m_n\}$ be the marked points and  $\{q_1,...,q_p\}$ be the punctures. Then $K$ is normally generated by conjugates of $T_{ij}$, which are Dehn twists about curves surrounding the points $m_i$ and $m_j$ for $1\le i,j\le n$.

The centralizer of $T_{ij}$ satisfies the following exact sequence:
\begin{equation}\label{3}
1\to \ZZ\xrightarrow{T_{ij}} C(T_{ij})\to PB_{n-1}(S_{g,p})\to 1.
\end{equation}
The only difference between punctured case and closed case is that  the Euler class of \eqref{3} is rationally trivial but the Euler class of \eqref{1} is rationally nontrivial. So we need a different strategy.






We now work with the case $n=2$ and the case $n=2$ implies the rest by induction. We will show in this case that $\rho(T_{ij})$ is trivial. Assume that  $\rho(T_{12})\neq 1$, where $T_{12}$ is the Dehn twist about a simple closed curve $c$ surrounding $m_1$ and $m_2$. We claim that there exists a simple closed curve $c$ surrounding  $m_1$ and $q_k$ or $m_2$ and $q_k$ for some $k$ such that $\rho(T_{c'})$ is nontrivial. Otherwise, $\rho$ is trivial over all of such simple closed curves; in which case we would know that $\rho$ factors through $PB_2(S_g)$, since Dehn twists about all of such simple closed curves generate the kernel of the natural homomorphism $PB_2(S_{g,p})\to PB_2(S_g)$. Then we conclude the theorem by Theorem \ref{hyperbolic}. Without loss of generality, we assume there exists $c'$ surrounding $m_1$ and $q_k$ such that $\rho(T_{c'})\neq 1$.

For convenience, we introduce the following notations. Let  $x_1=m_1$, $x_2=m_2$, $x_3=q_k$ and $x_4$ be the set of the rest of the punctures other than $q_k$, which are schematically represented by Figure \ref{haha} below on a plane $P\cong \mathbb{R}^2\subset S_{g,p}$. In Figure \ref{haha}, we call a simple closed curve $c_{i_1...i_j}$ if it surrounds a convex disk with punctures $x_{i_1},...,x_{i_j}$. We position the plane $P$ such that $c_{12}=c$ and $c_{13}=c'$. For example, Figure \ref{haha} has $c_{34},c_{4}, c_{13}$.
\begin{figure}[H]
\centering
 \includegraphics[width = 2in]{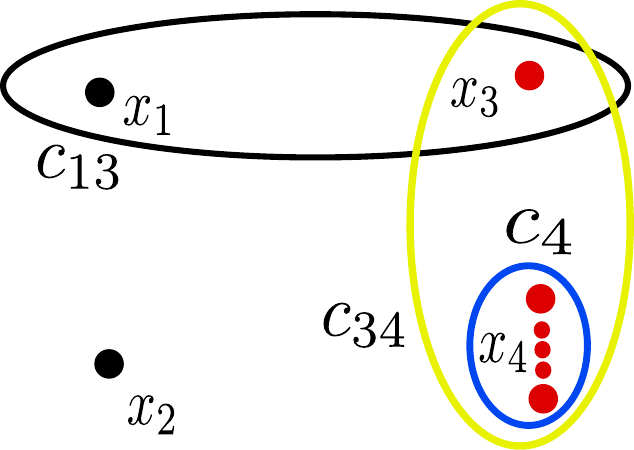}
 \caption{notation for curves}
 \label{haha}
\end{figure}

We continue to introduce more notations.
\begin{itemize}
\item Let $A_{12}$ be the Dehn twist about $c_{12}$.
\item Let $A_{i3}$ be the Dehn twist about $c_{i3}$ for $i\in \{1,2\}$.
\item Let $A_{i4}$ be the product of Dehn twist about $c_{i4}$ and a negative power of Dehn twist about $c_4$ for $i\in \{1,2\}$. 
\item Let $A_{123}$ be the Dehn twist about $c_{123}$ and let $A_{124}$ be the product of Dehn twist about  $c_{124}$ and  a negative power of Dehn twist about $c_4$. 
\item Let $A_{1234}$ be the product of Dehn twist about  $c_{1234}$ and  a negative power of Dehn twist about $c_{34}$. 
\end{itemize}
In defining the above list of elements, we sometimes multiply a negative power of Dehn twist about ${c_{4}}$ or ${c_{34}}$ so that all of $A_*$ lie in $PB_2(S_{g,p})$. On \cite[Page 97]{FM} and \cite[Page 119]{FM}, point-pushing map and disc-pushing map are defined as subgroups of $\Mod(S_{g,p+2})$. The point-pushing map of a loop is the mapping class in the isotopy class of pushing a marked point around a loop in $S_{g,p+1}$ (the other marked point stays still); the disc-pushing map is  of a loop is the mapping class in the isotopy class of pushing the disc with boundary $c_{12}$ around a loop in $S_{g,p}$. We have the following relations originate from the point(or disc)-pushing map or the lantern relation (see \cite[Chapter 5]{FM}) for more details about relations in mapping class groups).

\begin{enumerate}
\item $A_{12}A_{13}A_{23}=A_{123}$.
\item $A_{12}A_{14}A_{24}=A_{124}$.
\item $A_{123}A_{124}A_{12}^{-2}=A_{1234}A_{12}^{-1}$, which is the disc-pushing of $c_{12}$ around $c_{34}$. After forgetting $m_1$ and $m_2$, the curve $c_{34}$ is the boundary of a genus $g$ subsurface in $S_{g,p}$. Therefore,  $A_{1234}A_{12}^{-1}$ is a commutator in the centralizer of $A_{12}$.

\item $A_{12}A_{14}A_{13}=A_{1234}A_{234}^{-1}$, which is the point-pushing of $x_1$ around $c_{234}$. Similarly $A_{1234}A_{234}^{-1}$ is a commutator in the centralizer of $A_{234}$, $A_{23}$ and $A_{24}$ because the point-push of $x_1$ around other curves in the punctured surface do not intersect $c_{234}$.
\item $A_{12}A_{23}A_{24}=A_{1234}A_{134}^{-1}$, which is the  point-pushing of $x_2$ around $c_{134}$ and similarly is a commutator in the centralizer of $A_{134}$, $A_{13}$ and $A_{14}$.
\end{enumerate}

According to the assumption, we know that both $\rho(A_{12})$ and $\rho(A_{13})$ are nontrivial.  Therefore by (3) and the fact that the centralizer of $\rho(A_{12})$ is cyclic, we know that 
\[
\rho(A_{1234}A_{12}^{-1})=\rho(A_{123}A_{124}A_{12}^{-2})=1\]
By (5) that $A_{1234}A_{134}^{-1}$ is a commutator which  commutes with $A_{13}$ and  that $\rho(A_{13})\neq 1$, we obtain \[
\rho(A_{12}A_{23}A_{24})=\rho(A_{1234}A_{134}^{-1})=1.\] This implies that either $\rho(A_{23})\neq 1$ or $\rho(A_{24})\neq 1$, which further implies that the image under $\rho$ of the commutator in (4) is trivial, as it lies in the centralizers of $\rho(A_{23})$ and $\rho(A_{24})$.

Therefore we know that $\rho(A_{12}A_{14}A_{13})=1$ in (4). Multiplying (1),(2) and $\rho(A_{123}A_{124})=\rho(A_{12}^{2})$ gives us 
\[
\rho(A_{13}A_{14}A_{23}A_{24})=1.
\]
It contradicts the result of the multiplication of (4) and (5) under $\rho$  \[
\rho(A_{13}A_{14}A_{23}A_{24}A_{12}^2)=1. \qedhere\]
\end{proof}

\section{The pure braid group of the disc}
In the section, we will prove Theorem \ref{Pn}. We first introduce a generating set for $PB_n$. Recall that $PB_n$ is the pure mapping class group of the disk with $n$-marked points; i.e., $\pi_0(\Diff(\mathbb{D}_n))$, where $\Diff(\mathbb{D}_n)$ is the group of diffeomorphisms of $\mathbb{D}$ fixing $n$ marked points pointwise . Consider the disk with $n$ marked points $\mathbb{D}_n$ in Figure \ref{p1}. 

\begin{figure}[H]
\minipage{0.30\textwidth}
  \includegraphics[width=\linewidth]{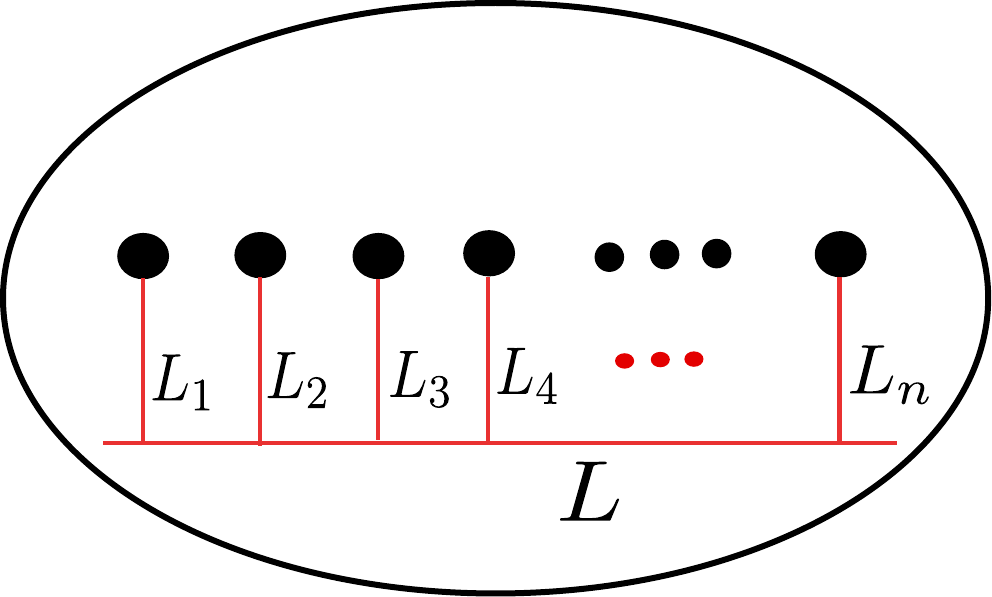}
  \caption{$\mathbb{D}_{n}$.}\label{p1}
\endminipage\hfill
\minipage{0.30\textwidth}
  \includegraphics[width=\linewidth]{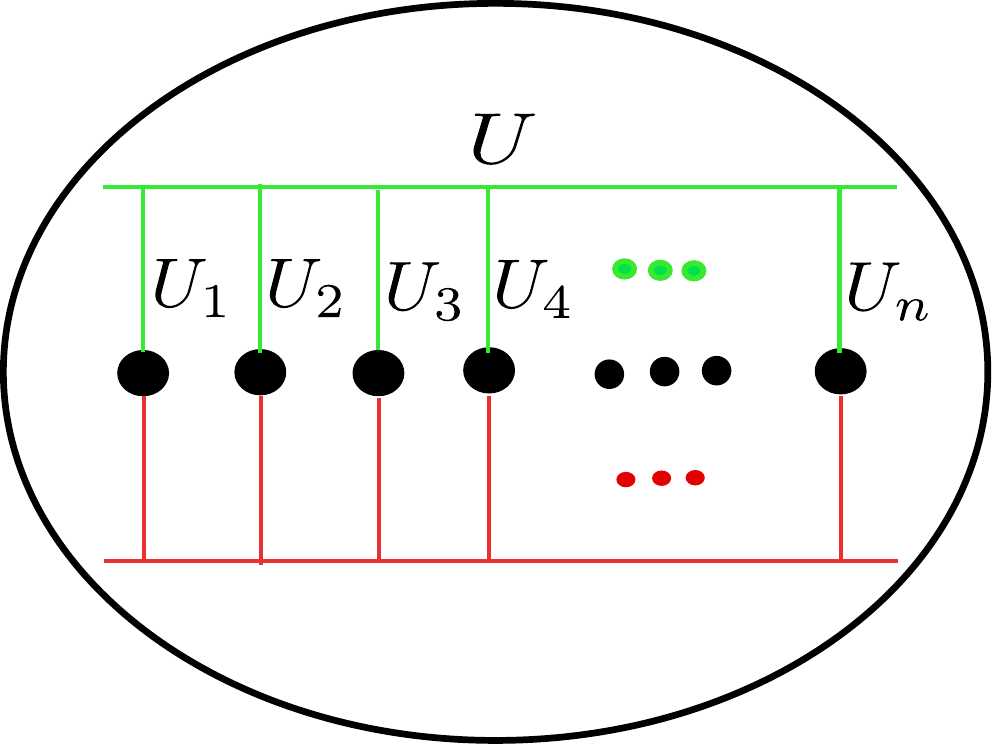}
  \caption{}\label{p3}
\endminipage\hfill
\minipage{0.30\textwidth}
  \includegraphics[width=\linewidth]{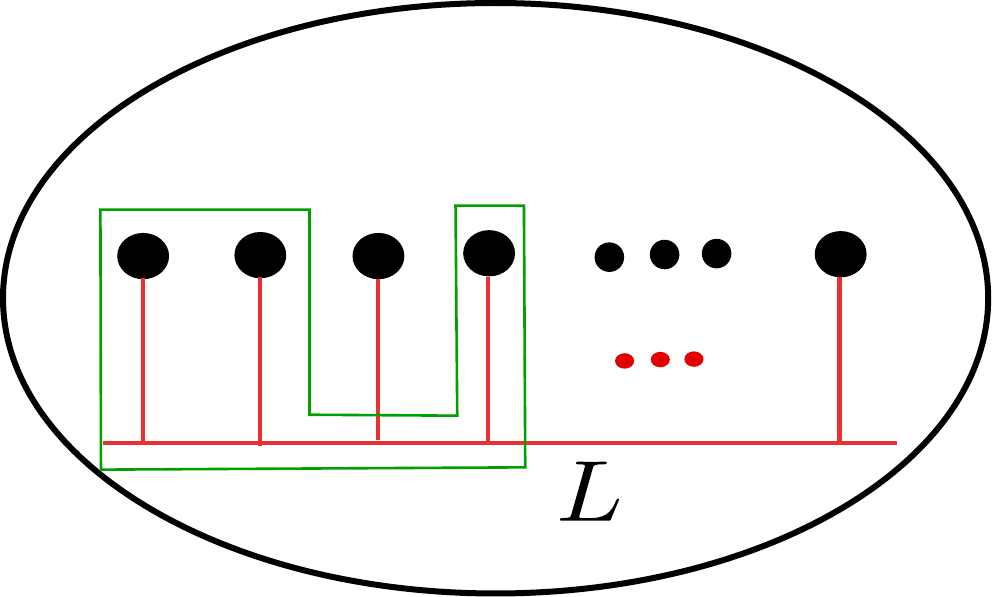}
  \caption{$a_{124}$.}
  \label{p2}
\endminipage\hfill
\end{figure}

Let $L$ be a line segment below all the marked points $x_1,...,x_n$. Let $L_1, ..., L_n$ be line segments connecting $x_1, ..., x_n$ to $L$ as in Figure \ref{p1}. Similarly, let $U$ be a line segment above all marked points and let $U_1,...,U_n$ be line segments connecting $x_1, ..., x_n$ to $U$ as shown in Figure \ref{p3}.

For $\{i_1, ..., i_k\}\subset \{1, ..., n\}$, let $a_{i_1i_2...i_k}$ (resp. $a_{i_1i_2...i_k}'$) be the boundary curve of the tubular neighborhood of 
$\bigcup_{m=1}^k L_{i_m}\cup L$ (resp. $\bigcup_{m=1}^k U_{i_m}\cup U$). Let $T_{i_1i_2...i_k}$ (resp. $T_{i_1i_2...i_k}'$) be the Dehn twist about $a_{i_1i_2...i_k}$ (resp. $a_{i_1i_2...i_k}'$). Figure \ref{p2} gives  an example of a curve representing $a_{124}$. The following proposition about generating sets of $PB_n$ is classical and can be found in \cite[Theorem 2.3]{MM}.
\begin{prop}
Both $\{T_{ij} |1\le i<j\le n\}$ and $\{T_{ij}' |1\le i<j\le n\}$ are generating sets for $PB_n$.
\end{prop}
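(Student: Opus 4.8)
The plan is to establish the first claim by induction on $n$ using the Fadell--Neuwirth fibration, and then to deduce the second claim from the first by exhibiting a symmetry of $\mathbb{D}_n$ that carries each curve $a_{ij}$ to the corresponding curve $a'_{ij}$. Throughout, the twists $T_{ij}$ about the curves $a_{ij}$ are to be identified with the classical pure braid generators $A_{ij}$, so that the first statement is the standard generation result for $PB_n$ recast in the present notation.

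For the first generating set, I would record the base case $n=2$, where $PB_2\cong\ZZ$ is generated by $T_{12}$. For the inductive step I would invoke the fibration $\PConf_n(\CC)\to\PConf_{n-1}(\CC)$ that forgets the last point, whose fiber is $\CC$ with $n-1$ punctures. The long exact sequence of homotopy groups yields the split short exact sequence
\[
1\to F_{n-1}\to PB_n\xrightarrow{f} PB_{n-1}\to 1,
\]
where $f$ is the forgetful map and $F_{n-1}=\pi_1\bigl(\CC\setminus\{x_1,\dots,x_{n-1}\}\bigr)$ is free of rank $n-1$. The key geometric observation is that a free basis of the fiber $F_{n-1}$ is given precisely by the classes $T_{1n},\dots,T_{(n-1)n}$: the loop of the $n$th point around $x_i$ is isotopic to the Dehn twist about a curve enclosing $x_i$ and $x_n$, which is exactly $a_{in}$. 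By the inductive hypothesis $PB_{n-1}$ is generated by $\{T_{ij}:1\le i<j\le n-1\}$; lifting these generators along the splitting together with the fiber generators $T_{in}$ then shows that $PB_n$ is generated by the full set $\{T_{ij}:1\le i<j\le n\}$.

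For the second generating set I would argue by symmetry rather than by rewriting each $T'_{ij}$ as a word in the $T_{ij}$. Let $r$ be the reflection of $\mathbb{D}_n$ across the horizontal line through the marked points $x_1,\dots,x_n$. Since each $x_i$ lies on this line, $r$ fixes every marked point, and conjugation by $r$ therefore induces an automorphism $r_*$ of $PB_n$. Because $r$ interchanges the upper and lower half-disks, it sends each segment $L_i$ to $U_i$ and the line $L$ to $U$, and hence carries $a_{ij}$ to $a'_{ij}$; as $r$ is orientation-reversing, $r_*$ sends $T_{ij}$ to $(T'_{ij})^{-1}$. An automorphism takes a generating set to a generating set, so $\{(T'_{ij})^{-1}:1\le i<j\le n\}$, and therefore $\{T'_{ij}:1\le i<j\le n\}$, generates $PB_n$.

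The main obstacle I anticipate is the geometric bookkeeping in the inductive step: one must verify that the fiber subgroup $F_{n-1}$ is freely generated by exactly the classes $T_{in}$ under the identification of point-pushing loops with Dehn twists about the curves $a_{in}$, and that the chosen splitting lifts the generators $T_{ij}$ with $i,j<n$ to the intended twists in $PB_n$. Once this correspondence is pinned down, both the induction and the symmetry argument are routine, since the generation statement is insensitive to the precise placement of $L$ and $U$ up to isotopy.
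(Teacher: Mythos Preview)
Your argument is sound. The paper does not actually supply a proof of this proposition; it simply records it as classical and cites \cite[Theorem 2.3]{MM}. Your inductive proof via the Fadell--Neuwirth fibration is exactly the standard argument behind that citation, so for the first generating set you are reproducing the intended proof. For the second generating set, your reflection trick is a clean shortcut: rather than rerunning the induction or expressing each $T'_{ij}$ as a word in the $T_{ij}$, you exploit the evident up/down symmetry of the picture to transport one generating set to the other via an automorphism of $PB_n$. This is a genuinely more economical route than repeating the fibration argument with $U$ in place of $L$, and it makes transparent why the two families play symmetric roles.

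One small point worth tightening: when you invoke the splitting of $1\to F_{n-1}\to PB_n\to PB_{n-1}\to 1$, you should say explicitly which section you use (e.g.\ placing $x_n$ near the boundary, away from the convex hull of $x_1,\dots,x_{n-1}$) so that the lifted generator of $PB_{n-1}$ corresponding to $T_{ij}$ really is the Dehn twist about $a_{ij}$ in $\mathbb{D}_n$ and not some conjugate. You flag this yourself as the main bookkeeping issue, and once it is nailed down the rest is routine.
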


Before proving Theorem \ref{Pn}, we analyze the map $R_*: PB_4\to PB_3$. 
\begin{fact}\label{RQ}
The map $R_*$ satisfies the following relations
\[
R_*(T_{12})=T_{12}, R_*(T_{23})=T_{23}, R_*(T_{34})=T_{12}, 
\]\[
R_*(T_{13})=T_{13},  R_*(T_{24})=T_{12}^{-1}T_{23}^{-1}T_{12}T_{13}T_{23}\text{ and }
R_*(T_{14})=T_{23}.\]
\end{fact}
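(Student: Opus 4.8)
The plan is to compute $R_*$ directly on the standard generators by tracking, for each $T_{ij}$, the image loop of the triple $(p,q,r):=R(a,b,c,d)$ in $\PConf_3(\CC)$. The whole computation is driven by the \emph{resolvent identities}
\[
p-q=(a-d)(b-c),\quad q-r=(a-b)(c-d),\quad p-r=(a-c)(b-d),
\]
written with $(a,b,c,d)=(x_1,x_2,x_3,x_4)$. These first of all show that $R$ does land in $\PConf_3(\CC)$, since each right-hand side is a product of two differences of distinct points; and since $PB_3$ only sees the configuration $(p,q,r)$ up to a global translation of $\CC$, the image loop is completely recorded by the three difference-paths above. The three products correspond to the three ways of splitting $\{1,2,3,4\}$ into two pairs.

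First I would treat one ``plain'' generator explicitly to fix conventions. Representing $T_{12}$ by the loop that rotates $x_1,x_2$ once about their midpoint while $x_3,x_4$ stay fixed, direct substitution gives $p$ moving on a small loop about a fixed value, while $q$ and $r$ trace $c\pm\varepsilon e^{2\pi i t}$ about a common center $c$; that is, $q$ and $r$ perform a single full twist about one another while $p$ stays away. Hence $R_*(T_{12})$ is the generator of $PB_3$ about the pair $\{q,r\}$, which under the chosen labeling is $T_{12}$.

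To organize the answer and cut down repetition I would exploit that $R$ is $S_4$-equivariant for the exceptional surjection $S_4\to S_3$ with kernel the Klein four-group $V=\{e,(12)(34),(13)(24),(14)(23)\}$: permuting $(a,b,c,d)$ permutes the three pairings, hence the coordinates $(p,q,r)$, and every element of $V$ fixes all of $p,q,r$. Thus $R\circ\tau=R$ for $\tau\in V$, so $R_*(\tau_*g)=R_*(g)$ for all $g$. The element $(12)(34)\in V$ carries the pair $\{1,2\}$ to $\{3,4\}$, $\{2,3\}$ to $\{1,4\}$, and $\{1,3\}$ to $\{2,4\}$; since relabelling strands is realized geometrically with some braiding, $\tau_*$ sends each $T_{ij}$ to a \emph{conjugate} of $T_{\tau(i)\tau(j)}$. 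Hence the six generators fall into three $V$-orbits $\{T_{12},T_{34}\}$, $\{T_{23},T_{14}\}$, $\{T_{13},T_{24}\}$, and within each orbit the two images under $R_*$ are conjugate in $PB_3$. Explicit tracking then shows the first two orbits give coinciding images ($R_*(T_{34})=R_*(T_{12})=T_{12}$, $R_*(T_{14})=R_*(T_{23})=T_{23}$), while in the third orbit $R_*(T_{24})$ is a nontrivial conjugate of $R_*(T_{13})=T_{13}$.

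The main obstacle is pinning down this conjugator exactly, i.e. producing the word $R_*(T_{24})=T_{12}^{-1}T_{23}^{-1}T_{12}T_{13}T_{23}$ rather than merely ``some conjugate of $T_{13}$''. I would do this by the same explicit tracking, now for the non-simple loop representing $T_{24}$ (in which $x_4$ is pushed around $x_2$ along a path skirting the intervening strands), reading off how the third point threads between the other two; equivalently, one computes the braiding $w$ with $\tau_*(T_{13})=wT_{24}w^{-1}$ and applies $R_*$ to the five already-known generators. Getting the orientations and the order of the conjugating letters right is the delicate point. Finally I would run two consistency checks. On first homology all six images must reproduce the abelianized map and sum to $2\bigl([T_{12}]+[T_{13}]+[T_{23}]\bigr)$; and since the global rotation $x_i\mapsto e^{i\theta}x_i$ scales each of $p,q,r$ by $e^{2i\theta}$, the generator $Z_4$ of the center of $PB_4$ must satisfy $R_*(Z_4)=Z_3^{2}$, a relation the six images are required to respect.
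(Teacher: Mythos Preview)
Your approach is sound but takes a genuinely different route from the paper. The paper observes that $R$ is $S_4$-invariant and therefore descends to a map of \emph{unordered} configuration spaces $\Conf_4(\CC)\to\Conf_3(\CC)$, hence induces a homomorphism $R'_*\colon B_4\to B_3$ on the full braid groups. On the Artin generators this is simply $\sigma_1\mapsto\sigma_1$, $\sigma_2\mapsto\sigma_2$, $\sigma_3\mapsto\sigma_1$ (up to the obvious relabeling of the target), and since each $T_{ij}$ is a short word in the $\sigma_k$ (e.g.\ $T_{24}=\sigma_3\sigma_2^{2}\sigma_3^{-1}$), all six images---including the awkward $R_*(T_{24})=\sigma_1\sigma_2^{2}\sigma_1^{-1}$---drop out by a mechanical rewriting in $B_3$. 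Your geometric computation via the resolvent identities and the $S_4\to S_3$ equivariance is correct and more illuminating as to \emph{why} the three $V$-orbits behave as they do, but it leaves precisely the step you flag as delicate (the exact conjugator for $T_{24}$) to an ad hoc loop-tracking or to solving for $w$ indirectly, whereas the $\sigma$-calculus handles it in one line. One small caution: with $R$ ordered as in the paper and your identities, the pair that winds under your $T_{12}$-loop is $\{q,r\}$, which in the naive labeling $(p,q,r)=(y_1,y_2,y_3)$ is $T_{23}$, not $T_{12}$; your phrase ``under the chosen labeling'' is doing real work and should be made explicit, or you can sidestep the issue entirely by passing through $B_4\to B_3$ as the paper does.
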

\begin{proof}
The ``resolving the quartic" map is also a map $R':\Conf_4(\mathbb{C})\to \Conf_3(\mathbb{C})$, where $\Conf_n(\mathbb{C})$ is the space of unordered n-tuples of points in $\mathbb{C}$. It induces a homomorphism on the fundamental groups $R_*': B_4\to B_3$, where $B_n:=\pi_1(\Conf_n(\mathbb{C}))$ is the braid group. 

The above relations can be computed using the map on the braid group $R_*: B_4\to B_3$. Let $\sigma_i$ be the standard generating set for the braid group $B_n$. The map $R_*'$ satisfies that 
\[R_*'(\sigma_1)=\sigma_1, R_*'(\sigma_2)=\sigma_2 \text{ and }R_*'(\sigma_3)=\sigma_1.\] The restriction of $R_*'$ on $PB_n$ can be computed from it.
\end{proof}
The homomorphism $RQ$ satisfies the following.
\begin{fact}\label{RQQ}
Let $\{a,b\}$ be the natural generating set of $F_2$. Then the homomorphism $RQ$ satisfies 
\[
RQ(T_{12})=a, RQ(T_{23})=b, RQ(T_{34})=a,\]
\[RQ(T_{13})=b^{-1}a^{-1}, RQ(T_{24})=a^{-1}b^{-1}\text{ and }RQ(T_{14})=b.\]
\end{fact}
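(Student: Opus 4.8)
The plan is to exploit the factorization $RQ=Q\circ R_*$: since the values of $R_*$ on the standard generators $T_{ij}$ of $PB_4$ are already recorded in Fact \ref{RQ}, the whole computation reduces to understanding the quotient map $Q\colon PB_3\to PB_3/Z_3\cong F_2$ on the three generators $T_{12},T_{13},T_{23}$ of $PB_3$, and then substituting.

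First I would pin down $Q$ on $PB_3$. The structural input is the classical fact that $PB_3\cong F_2\times\ZZ$, where the central $\ZZ$-factor $Z_3$ is generated by the full twist about the boundary; consequently the quotient $PB_3/Z_3$ is free of rank two and is generated by the images of $T_{12}$ and $T_{23}$. Under $Q$ I therefore set $a:=Q(T_{12})$ and $b:=Q(T_{23})$, which identifies the natural generating set $\{a,b\}$ of $F_2$. To evaluate $Q(T_{13})$ I would use the relation expressing the central full twist as a product of the generators, namely $Z_3=T_{12}T_{13}T_{23}$ (the disc-surrounding analogue of the type-(1) relation used earlier for the punctured surface). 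Since $Q(Z_3)=1$, this forces $Q(T_{13})=Q(T_{12})^{-1}Q(T_{23})^{-1}$, a word of length two in $a$ and $b$.

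With $Q$ in hand, the six values of $RQ$ follow by applying $Q$ to the formulas of Fact \ref{RQ}. Four of them are immediate because $R_*$ lands on a single generator: $RQ(T_{12})=Q(T_{12})=a$, $RQ(T_{34})=Q(T_{12})=a$, $RQ(T_{23})=Q(T_{23})=b$, and $RQ(T_{14})=Q(T_{23})=b$; and $RQ(T_{13})=Q(T_{13})$ is read off from the previous paragraph. The one genuine computation, and the step I expect to be the main obstacle, is
\[
RQ(T_{24})=Q\bigl(T_{12}^{-1}T_{23}^{-1}T_{12}T_{13}T_{23}\bigr),
\]
where one substitutes the length-two word for $Q(T_{13})$ and checks that the resulting length-six expression collapses, after free reduction, to a length-two word. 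This collapse is precisely where the ordering convention in the full-twist relation $Z_3=T_{12}T_{13}T_{23}$ must be matched against the convention under which the conjugated word $R_*(T_{24})$ was derived in Fact \ref{RQ}. Reconciling these two conventions (rather than the arithmetic itself) is the delicate point: with the wrong ordering the word for $Q(T_{13})$ fails to cancel in $RQ(T_{24})$, whereas with the matching ordering the two interior factors annihilate the contribution of $Q(T_{13})$ and the expression reduces immediately. Once the conventions are aligned, each of the six evaluations is a one-line verification in the free group $F_2$.
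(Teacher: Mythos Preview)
Your plan coincides with the paper's proof: the paper simply records $Q(T_{12})=a$, $Q(T_{23})=b$, $Q(T_{13})=b^{-1}a^{-1}$ and then appeals to Fact~\ref{RQ}, which is exactly your strategy of composing $Q$ with the tabulated values of $R_*$. The only remark is that the paper's convention (the lantern relation $T_{12}T_{23}T_{13}=T_{123}$ used later in the proof of Theorem~\ref{Pn}) gives $Q(T_{13})=b^{-1}a^{-1}$ rather than the $a^{-1}b^{-1}$ that your ordering $Z_3=T_{12}T_{13}T_{23}$ would produce; but you already correctly identified this convention-matching as the one delicate step.
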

\begin{proof}
The quotient map $Q: PB_3\to F_2$ satisfies that $Q(T_{12})=a$ and $Q(T_{23})=b$ and $Q(T_{13})=b^{-1}a^{-1}$. Thus by Fact \ref{RQ}, we conclude the proof.
\end{proof}

We now start the proof of Theorem \ref{Pn}.
\begin{proof}[Proof of Theorem \ref{Pn}]
Let $\Lambda$ be a torsion-free, non-elementary hyperbolic group and 
let $\rho: PB_n\to \Lambda$ be a homomorphism such that the image is not a cyclic group. Let $Z_n$ be the generator of the center of $PB_n$, which is the Dehn twist about the boundary curve. We prove this theorem by induction on $n$. Firstly, when $n=3$, if $\rho(Z_3)\neq 1$, then $\rho(PB_3)$ lies in the centralizer of $\rho(Z_3)$, which is a cyclic group. Thus, we know that $\rho$ factors through $Q$. We assume now that the theorem is true for $n-1$. 

Since $\{T_{ij} |1\le i<j\le n\}$ is a generating set of $PB_n$ and the image of $\rho$ is not cyclic, we know there exist two elements $\rho(T_{i,j})$ and $\rho(T_{i',j'})$ that do not commute. It implies that $\{i,j\}\cap \{i',j'\}\neq \emptyset$ because otherwise $T_{i,j}$ and $T_{i',j'}$ commute. Observe that  there exists an element $g\in B_n$ such that $gT_{i,j}g^{-1}=T_{12}$ and $gT_{i',j'}g^{-1}=T_{23}$. Then up to a conjugation by $g$, we assume that $a:=\rho(T_{12})$ and $b:=\rho(T_{23})$ do not commute (a conjugation by $g$ is equivalent to a rename of punctures).

We split the rest of the proof into two cases depending on whether $\rho(T_{34})$ is trivial or not.  

\para{The case when \boldmath$\rho(T_{34})=1$}
By the lantern relation, we have
\begin{equation}\label{lantern1}
T_{123}T_{34}T_{124}=T_{12}T_{1234}.
\end{equation}
Since $\rho(T_{1234})$ commutes with both $a=\rho(T_{12})$ and $b=\rho(T_{23})$, we know that $\rho(T_{1234})=1$. Similarly, we know that $\rho(T_{123})=1$. Thus we have $\rho(T_{124})=a$. Since $\rho(T_{14})$ commutes with both $b=\rho(T_{23})$ and $a=\rho(T_{124})$, we know that $\rho(T_{14})=1$. Then by the lantern relation, we have 
\begin{equation}\label{lantern2}
T_{12}T_{24}T_{14}=T_{124}.
\end{equation}
Thus we have $\rho(T_{24})=1$. Since $\rho(T_{4j})$ commutes with both $a=\rho(T_{12})$ and $b=\rho(T_{23})$ for any $j>4$ , we know that $\rho(T_{4j})=1$ for $j>4$. Observe that $\{T_{4j}|1\le j \neq 4\le n\}$ is a generating set of the kernel of the the forgetful map that forgets the fourth points $F_4: PB_n\to PB_{n-1}$. Then we know that $\rho$ factors through $F_4$, which by induction induces the result.

\para{The case when \boldmath$\rho(T_{34})\neq 1$} 
We first prove the claim that $\rho(T_{34})=a$. If not, by equation \eqref{lantern1}, we know that $\rho(T_{124})\neq 1$ and it commutes with $a$. Since $\rho(T_{14})$ commutes with two independent elements $\rho(T_{124})$ and $\rho(T_{23})$, we know that $\rho(T_{14})=1$. Thus by equation \eqref{lantern2}, we know that $\rho(T_{24})\neq 1$ and commutes with $a$. The element $\rho(T_{234})$ is trivial since it commutes with two independent elements $\rho(T_{23})$ and $\rho(T_{34})$. This contradicts with the  lantern relation $T_{23}T_{34}T_{24}=T_{234}$ because $\rho(T_{34}T_{24})$ commutes with $a$ but $\rho(T_{23})$ does not.

We now prove that \[
\rho(T_{14})=b, \rho(T_{13})=b^{-1}a^{-1}\text{ and  }\rho(T_{24})=a^{-1}b^{-1}.\] Since $\rho(T_{123})$ commutes with $\rho(T_{12})$ and $\rho(T_{23})$, we know that $\rho(T_{123})=1$. Similarly, we know that $\rho(T_{234})=1$. By the lantern relation $T_{12}T_{23}T_{13}=T_{123}$, we know that $\rho(T_{13})=b^{-1}a^{-1}$. Then by the lantern relation $T_{23}T_{34}T_{24}=T_{234}$, we know that $\rho(T_{24})=a^{-1}b^{-1}$. Since $\rho(T_{124})$ commutes with $a=\rho(T_{12})$ and $a^{-1}b^{-1}=\rho(T_{24})$, we know that $\rho(T_{124})=1$. Now by the lantern relation $T_{12}T_{24}T_{14}=T_{124}$ we obtain that $T_{14}=b$.  When $n=4$, by Fact \ref{RQQ}, we know that $\rho$ factors through $RQ$.

When $n>4$, we claim that $\rho(T_{5j}')=1$ for $1\le j\neq 5\le n$, which would imply that $\rho$ factors through the forgetful map $F_5: PB_n\to PB_{n-1}$, since $\{T_{5j}'|1 \le j\neq 5\le n\}$ is a generating set of the kernel of $F_5$. The equation $\rho(T_{51}')=1$ follows from the commutativity of $\rho(T_{5j}')$ with $\rho(T_{23})$ and $\rho(T_{34})$. The triviality of $\rho(T_{5j}')$ for other $j$ follows from the same reason, and the result follows.

\end{proof}





\bibliography{citing}{}
\vskip 0.3cm
\noindent
Department of Mathematics\\
California Institute of Technology\\
Pasadena, CA 91125,  USA \\
chenlei@caltech.edu
\end{document}